\newcommand{\ic}{\mathrm i}
\begin{document}
\title{Some manifolds of Khinchin type for convergence}

\authordavid

%\subjclass[2010]{Primary }
%\keywords{}
%\date{}
%\dedicatory{}

\begin{Abstract}
Recently, Beresnevich, Vaughan, Velani, and Zorin (preprint '15) gave some sufficient conditions for a manifold to be of Khinchin type for convergence. We show that their techniques can be used in a more optimal way to yield stronger results. In the process we also improve a theorem of Dodson, Rynne, and Vickers ('89).
\end{Abstract}
\maketitle

\section{Khinchin-type results}
\label{sectionkhinchin}
Fix $n\in\N$. It is an easy consequence of the Borel--Cantelli lemma that if $\psi:\N\to \Rplus$ is a function such that the series
\begin{equation}
\label{khinchinseries}
\sum_{q = 1}^\infty \psi^n(q)
\end{equation}
converges, then for all $\bftheta\in \R^n$, the set
\[
\SS(\psi,\bftheta) \df \{\xx\in \R^n : \exists^\infty q\in\N \;\; \|q\xx - \bftheta\| < \psi(q)\}
\]
is of Lebesgue measure zero. Here $\|\cdot\|$ denotes distance to the nearest integer vector, measured using the max norm, which we denote by $|\cdot|$. The preceding result is known as \emph{the convergence case of Khinchin's theorem}. A manifold $\MM \subset \R^n$ is said to be of Khinchin type for convergence if its typical points behave like the typical points of Lebesgue measure with respect to this theorem. More precisely, let us say that $\MM$ is \emph{of strong (resp. weak) Khinchin type for convergence} if for every function (resp. monotonic function) $\psi$ satisfying \eqref{khinchinseries} and for all $\bftheta\in \R^n$, the set $\SS(\psi,\bftheta)\cap M$ has measure zero with respect to the Lebesgue measure of $\MM$.\Footnote{The use of the adjectives ``strong'' and ``weak'' in this context is new. In the literature, the phrase ``Khinchin type for convergence'' usually means ``weak Khinchin type for convergence''.}

Recently, Beresnevich, Vaughan, Velani, and Zorin proved the following theorem (which we have taken some liberties in rephrasing):

\begin{theorem}[{\cite[Corollaries 3 and 5]{BVVZ}}]
\label{theoremBVVZ}
Fix $d,m\in\N$, let $\KK\subset\R^d$ be a closed rectangle,\Footnote{Although the authors of \cite{BVVZ} only prove the special case $\KK = [0,1]^d$, their arguments work just as well for the general case.} let $\ff:\KK\to\R^m$ be a function of class $\CC^2$, and let
\begin{equation}
\label{Mfdef}
\MM = \MM_{\KK,\ff} = \{(\bfalpha,\ff(\bfalpha)) : \bfalpha\in \KK\} \subset \R^{d + m}.
\end{equation}
If we are in either of the following scenarios:
\begin{itemize}
\item[1.] $m < d - 1$, and for Lebesgue-a.e. $\bfalpha\in \KK$, we have
\begin{equation}
\label{det1}
\det\big(f_j''(\bfalpha)[\ee_1,\ee_i]\big)_{1 \leq i,j \leq m} \neq 0;
\end{equation}
\item[2.] $m = 1$, $d\geq 2$, and for Lebesgue-a.e. $\bfalpha\in \KK$, we have
\begin{equation}
\label{det2}
\det\big(f_1''(\bfalpha)[\ee_i,\ee_j]\big)_{1 \leq i,j \leq d} \neq 0;
\end{equation}
\end{itemize}
then $\MM$ is of strong Khinchin type for convergence.\Footnote{Although the statements of \cite[Corollaries 3 and 5]{BVVZ} only yield that $\MM$ is of weak Khinchin type for convergence, the proofs actually show that $\MM$ is of strong Khinchin type for convergence, since the assumption that $\psi$ is monotonic is not used anywhere in the proofs.}
\end{theorem}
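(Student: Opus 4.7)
The plan is to apply the Borel--Cantelli lemma to the sets
\[
A_q \df \MM \cap \{\xx\in \R^n : \|q\xx - \bftheta\| < \psi(q)\}, \qquad q\in\N.
\]
If $\sum_q \mu_\MM(A_q) < \infty$ (where $\mu_\MM$ denotes Lebesgue measure on $\MM$) whenever $\sum_q \psi^n(q) < \infty$, then almost every point of $\MM$ lies in only finitely many $A_q$, so $\SS(\psi,\bftheta)\cap\MM$ is null; monotonicity of $\psi$ is nowhere used, which is exactly why the conclusion upgrades to \emph{strong} Khinchin type. Parameterising $\MM$ by $\bfalpha\mapsto(\bfalpha,\ff(\bfalpha))$ and writing $\bftheta=(\bftheta_1,\bftheta_2)\in\R^d\times\R^m$ and $\pp=(\pp_1,\pp_2)\in\Z^d\times\Z^m$, the pull-back of $A_q$ to $\KK$ is the union over $(\pp_1,\pp_2)$ of cells of volume $O((\psi(q)/q)^d)$ on which $\bfalpha$ is close to $(\pp_1+\bftheta_1)/q$ and $\ff(\bfalpha)$ is close to $(\pp_2+\bftheta_2)/q$. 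With $O(q^d)$ admissible $\pp_1$, the trivial bound $\mu_\MM(A_q) \lesssim \psi(q)^d$ falls short of the target $\psi(q)^{d+m}$ by a factor of $q^m$.

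The missing factor must be supplied by a counting estimate, uniform in $\bftheta$, showing that the number of points of $q^{-1}(\Z^n+\bftheta)$ within distance $\psi(q)/q$ of $\MM$ is $O(q^d\psi(q)^m)$; heuristically this says a generic $\pp_1$-cell is ``resonant'' with the $\pp_2$-lattice only with probability $\psi(q)^m$, and curvature of $\MM$ is needed to rule out the rigid alignments (such as flat pieces in rational directions) which would inflate the count. The two hypotheses correspond to the two geometric regimes in which such a bound is available. In case~2 ($m=1$, $\det f_1''\neq 0$), $\MM$ is a hypersurface in $\R^{d+1}$ of nonvanishing Gaussian curvature and the estimate is the classical count for rational points near a curved hypersurface (Jarnik--Sprindzuk), extended to shifted lattices via transference. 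In case~1 ($m<d-1$, condition \eqref{det1}), one reduces to lower-dimensional problems by slicing $\KK$ transverse to $\alpha_1,\ldots,\alpha_m$: the matrix in \eqref{det1} is exactly the Jacobian of $(\alpha_1,\ldots,\alpha_m) \mapsto \partial_{\alpha_1}\ff(\bfalpha)$, so a local change of coordinates makes $\partial_{\alpha_1}\ff$ affine, placing each $m$-dimensional slice within reach of a Sprindzuk/Pyartli-type estimate; Fubini in the remaining $d-m$ transverse directions then produces the required summable bound on $\mu_\MM(A_q)$.

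The principal obstacle is this fibered counting estimate in case~1. The hypothesis \eqref{det1} is strikingly weak---only an $m\times m$ block of the second fundamental form is controlled, and one of the two indices is pinned at $1$---so to avoid accumulating losses when integrating over the $d-m$ transverse directions, the per-slice bound must be completely uniform in those transverse variables and in $\bftheta$. Controlling the exceptional set on which \eqref{det1} degenerates and tracking constants through the change of variables is where the bulk of the technical work in \cite{BVVZ} concentrates, and presumably also where the present paper extracts its sharpening.
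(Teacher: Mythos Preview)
The high-level reduction---Borel--Cantelli plus a uniform count of shifted rational points near $\MM$---is correct and is indeed the architecture of both \cite{BVVZ} and the present paper. But two features of your outline do not survive contact with the actual argument.

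First, the target bound $A(q,\psi(q),\bftheta)=O(q^d\psi(q)^m)$ is too optimistic: no one proves this. What Theorem~\ref{theoremcounting} (and the analogous results in \cite{BVVZ}) actually give is
\[
A(q,\kappa,\bftheta)\lesssim q^d\max\big(\kappa,\phi(q)\big)^m,\qquad \phi(q)=\big(q^{-1}\Log^2 q\big)^{k/(2m+k)},
\]
with $k=1$ in Case~1 and $k=d$ (or $k=2$ in this paper's refinement) in Case~2. The error term $\phi(q)$ is not removable, and the numerical hypotheses enter \emph{precisely} here: one needs the auxiliary series $\sum_q\phi(q)^n$ to converge, which is equivalent to $k(n-1)>2m$, i.e.\ $m<d-1$ when $k=1$ and $d\geq 2$ when $k=2$. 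Your sketch uses $m<d-1$ only to have transverse directions to Fubini over, which would require merely $m<d$; the extra ``$-1$'' comes from the error term you have suppressed.

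Second, the methods you invoke for the count---``Jarn\'ik--Sprind\v{z}uk'' for the hypersurface case and a fibered ``Sprind\v{z}uk/Pyartli'' argument for Case~1---are not what either paper does. Both instead pass through an exponential-sum (Fej\'er kernel) inequality, recorded here as Lemma~\ref{lemmaBVVZ}, which bounds $A(q,\kappa,\bftheta)$ by a short sum over $\hh\in\Z^m$ of integrals $\int_\KK\prod_i\min\big(1,(r\|\hh\cdot\ff'(\bfalpha)[\ee_i]\|)^{-1}\big)\,\dee\bfalpha$. The curvature hypothesis is used, for each fixed $\hh$, to ensure that some $k\times k$ minor of the Jacobian of $\bfalpha\mapsto(\hh\cdot\ff'(\bfalpha)[\ee_i])_i$ is nonsingular, allowing a change of variables that computes the integral. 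There is no slicing of $\KK$ into $m$-dimensional fibers, no transference, and no separate geometric input for the $m=1$ case.

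Finally, note that the paper's own proof of Theorem~\ref{theoremBVVZ} is simply a two-line reduction to Theorem~\ref{theoremkhinchin}: \eqref{det1} implies \eqref{surjective} because the matrix in \eqref{det1} is one $m\times m$ minor of $\ff''(\bfalpha)$, and \eqref{det2} implies \eqref{rank2} because full rank $d\geq 2$ certainly gives rank at least $2$. The counting is then carried out once, at the level of the unified hypothesis \eqref{rankk}.
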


Here $(\ee_i)_{1 \leq i \leq d}$ denotes the standard basis of $\R^d$, and $f_j$ denotes the $j$th component of $\ff$.

An important fact about this theorem, which indicates that it is ``well phrased'', is that the hypotheses \eqref{det1} and \eqref{det2} are \emph{satisfiable} in the following sense: For any three numbers $d,m,n\in\N$ satisfying $d + m = n$ as well as the appropriate numerical hypothesis/hypotheses (i.e. $m < d - 1$ for Case 1, and $m = 1$, $d\geq 2$ for Case 2), there exists a function (and in fact many functions) $\ff:\KK\to\R^m$ such that the appropriate hypothesis on $\ff''$ (i.e. \eqref{det1} for Case 1, and \eqref{det2} for Case 2) holds. This indicates that the theorem is non-vacuous in a ``uniform'' way. Although this observation is somewhat trivial in the case of Theorem \ref{theoremBVVZ}, it will be less trivial in the case of the next two theorems.

Theorem \ref{theoremBVVZ} bears a strong resemblance to a theorem of Dodson, Rynne, and Vickers, which for convenience we write in a similar format:

\begin{theorem}[{\cite[Theorem 1.3]{DRV}}]
\label{theoremDRV}
Fix $d,m\in\N$, let $\KK \subset \R^d$ be a closed rectangle, let $\ff:\KK\to\R^m$ be a function of class $\CC^2$, and let $\MM \subset \R^{d + m}$ be as in \eqref{Mfdef}. If
\begin{itemize}
\item $m \leq \binom{d}{2}$, and for Lebesgue-a.e. $\bfalpha\in\KK$,% for all $\tt\in\R^m\butnot\{\0\}$, the matrix
%\begin{equation}
%\label{DRVmatrix}
%\big(\tt\cdot\ff''(\bfalpha)[\ee_i,\ee_j]\big)_{1\leq i,j\leq d}
%\end{equation}
%has at least two nonzero eigenvalues that share the same sign,
\begin{equation}
\label{DRVcond}
\begin{split}
&\text{for all $\tt\in\R^m\butnot\{\0\}$, the matrix $\big(\tt\cdot\ff''(\bfalpha)[\ee_i,\ee_j]\big)_{1\leq i,j\leq d}$}\\
&\text{has at least two nonzero eigenvalues that share the same sign.}
\end{split}
\end{equation}
\end{itemize}
then $\MM$ is of strong Khinchin type for convergence.
\end{theorem}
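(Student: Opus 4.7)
The plan is to adapt the Borel--Cantelli plus curvature-counting strategy used for Theorem \ref{theoremBVVZ} to the more flexible hypothesis \eqref{DRVcond}. Writing $\bftheta = (\bftheta', \bftheta'') \in \R^d \times \R^m$ and noting that $\bfalpha \mapsto (\bfalpha, \ff(\bfalpha))$ is bi-Lipschitz on $\KK$ since $\ff\in\CC^2$, it suffices to show $\lambda_d(\limsup_q E_q) = 0$, where
\[
E_q \df \{\bfalpha \in \KK : \exists (\pp,p) \in \Z^d \times \Z^m \text{ with } |q\bfalpha - \pp - \bftheta'| < \psi(q) \text{ and } |q\ff(\bfalpha) - p - \bftheta''| < \psi(q)\}.
\]
By Borel--Cantelli and a Vitali covering argument, the task reduces to proving $\sum_q \lambda_d(E_q \cap B) < \infty$ for every sufficiently small ball $B$ around any Lebesgue-typical $\bfalpha_0 \in \KK$ at which \eqref{DRVcond} holds; here $\lambda_d$ denotes Lebesgue measure on $\KK$.

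Next I would decompose $E_q\cap B$ into the pieces $E_{q,\pp,p}$ indexed by the lattice vectors $(\pp,p)$. The first constraint confines $\bfalpha$ to a cube $C_{q,\pp}$ of sidelength $\sim \psi(q)/q$ centred at $\bfalpha_{\pp,q} \df (\pp + \bftheta')/q$, and at most $O((qr)^d)$ such cubes meet $B$, where $r$ is the radius of $B$. Inside each $C_{q,\pp}$, a second-order Taylor expansion of $\ff$ about $\bfalpha_{\pp,q}$ converts admissibility of $p$ into a constraint on $\bfalpha-\bfalpha_{\pp,q}$ involving the affine map $\ff'(\bfalpha_{\pp,q})$ together with the quadratic form $\ff''(\bfalpha_{\pp,q})[\cdot,\cdot]$. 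The goal is to prove $\lambda_d\bigl(\bigcup_p E_{q,\pp,p}\bigr) \ll \psi(q)^{d+m}/q^d$; summing over $\pp$ then gives $\lambda_d(E_q\cap B)\ll r^d\psi(q)^n$, which is summable.

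The heart of the argument, and the step where \eqref{DRVcond} is used, is a uniform-in-$\tt$ curvature--counting lemma for the scalar quadratic forms $Q_\tt \df \tt\cdot\ff''(\bfalpha_0)[\cdot,\cdot]$ with $\tt\in S^{m-1}$. Under \eqref{DRVcond} each $Q_\tt$ has two nonzero eigenvalues of the same sign, and by continuity and the compactness of $S^{m-1}$ one obtains a uniform lower bound $\mu>0$ on their moduli on a suitably small neighborhood $B$ of $\bfalpha_0$. The classical two-dimensional annular estimate, applied on the 2-plane of definiteness of $Q_\tt$ and lifted by Fubini to the orthogonal $(d-2)$-dimensional directions, yields
\[
\lambda_d\bigl(\{\xx : |\xx|<\rho,\ |Q_\tt(\xx)-c|<\eta\}\bigr) \;\ll\; \mu^{-1}\rho^{d-2}\eta
\]
uniformly in $\tt\in S^{m-1}$, $\rho>0$, and $c\in\R$. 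Via the Taylor expansion, each admissible $p$ translates into a constraint of this form in some projective direction $\tt_p$ determined by $p$; summing the annular bound over the admissible lattice vectors delivers the target estimate for $\lambda_d\bigl(\bigcup_p E_{q,\pp,p}\bigr)$.

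The principal obstacle is carrying out this amalgamation uniformly in $\tt$. In BVVZ, the hypotheses \eqref{det1} and \eqref{det2} single out one explicit direction of non-degeneracy, so that a single counting estimate suffices; under \eqref{DRVcond} one must handle the entire $S^{m-1}$-family simultaneously, with the ``same-sign'' requirement entering precisely to guarantee a definite, hence geometrically controllable, 2-plane rather than an indefinite (and much larger) level set. A compactness argument covering $S^{m-1}$ by finitely many cones on which a fixed pair of eigendirections of $Q_\tt$ works should resolve the qualitative part, but the numerical bound $m\le\binom{d}{2}$ must be invoked at exactly the right place to absorb the sum over $p$ without introducing spurious polynomial or logarithmic losses. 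Verifying this balance is the principal technical difficulty.
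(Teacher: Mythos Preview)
Your decomposition has a genuine gap at the key step. The target estimate $\lambda_d\bigl(\bigcup_p E_{q,\pp,p}\bigr) \ll \psi(q)^{d+m}/q^d$ for a \emph{fixed} $\pp$ is simply false. Inside the cube $C_{q,\pp}$ of sidelength $\asymp \psi(q)/q$, the quadratic term in the Taylor expansion has magnitude $\asymp q\cdot(\psi(q)/q)^2 = \psi(q)^2/q$, which is negligible compared with the allowed error $\psi(q)$; so $\ff$ is effectively affine on each $C_{q,\pp}$ and no curvature information is available there. Consequently, either there is no admissible $p$ (and the set is empty) or there are $O(1)$ admissible $p$ and $E_{q,\pp,p}$ occupies a fixed fraction of the cube, giving measure $\asymp(\psi(q)/q)^d$. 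You cannot squeeze out the extra factor $\psi(q)^m$ cube by cube; the saving has to come from showing that \emph{most} cubes $C_{q,\pp}$ carry no admissible $p$, i.e.\ from a rational-point counting bound of the type $A(q,\kappa,\bftheta)\lesssim q^d\kappa^m$ for $\kappa\asymp\psi(q)$.

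This is exactly what the paper does, but by a completely different mechanism from your annular estimate. The paper observes that \eqref{DRVcond} trivially implies the weaker rank condition \eqref{rank2} (two nonzero eigenvalues, sign irrelevant), and then deduces Theorem~\ref{theoremDRV} from Case~2 of Theorem~\ref{theoremkhinchin}. The latter is proved via the counting Theorem~\ref{theoremcounting}, which uses exponential sums (the Fej\'er-kernel inequality of Lemma~\ref{lemmaBVVZ}) to work at the coarser scale $r\asymp(q\kappa)^{1/2}$, precisely the scale at which the second derivative becomes visible. The rank-$2$ hypothesis enters only through a Jacobian bound on the map $\bfalpha\mapsto(\tt\cdot\ff'(\bfalpha)[\ee_i])_{i\in I}$, not through any definiteness or annular-region argument. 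Two further points: the ``same sign'' part of \eqref{DRVcond} is never used (rank $\geq 2$ suffices), and the numerical bound $m\leq\binom{d}{2}$ plays no role in the proof --- it is present only because \eqref{DRVcond} is vacuous otherwise (Theorem~\ref{theoremtypical}(v)).
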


Here the assumption $m \leq \binom{d}{2}$ does not appear in \cite{DRV}, but we have added it because the hypothesis \eqref{DRVcond} is not satisfiable when $m > \binom{d}{2}$ (Theorem \ref{theoremtypical}(v)). It appears to be a difficult problem to determine precisely for which values $m \leq \binom{d}{2}$ the hypothesis is satisfiable; cf. the discussions in \cite[\62]{DRV2} and in Section \ref{sectiontypical}. We have made progress on this problem by showing that the hypothesis is satisfied generically whenever $m \leq \binom{d - 1}{2}$ (Theorem \ref{theoremtypical}(vii)).

The main goal of this paper is to generalize Theorems \ref{theoremBVVZ} and \ref{theoremDRV} simultaneously, yielding a new theorem more powerful than both of them. In the following theorem, Case 1 is a generalization of Case 1 of Theorem \ref{theoremBVVZ}, and Case 2 is a generalization of Case 2 of Theorem \ref{theoremBVVZ} and also of Theorem \ref{theoremDRV}:

\begin{theorem}
\label{theoremkhinchin}
Fix $d,m\in\N$, let $\KK\subset\R^d$ be a closed rectangle, let $\ff:\KK\to\R^m$ be a function of class $\CC^2$, and let the manifold $\MM \subset \R^{d + m}$ be given by \eqref{Mfdef}. If we are in either of the following scenarios:
\begin{itemize}
\item[1.] $m < d - 1$, and for Lebesgue-a.e. $\bfalpha\in \KK$,
\begin{equation}
\label{surjective}
\text{the map $\ff''(\bfalpha):\Sym^2\R^d \to \R^m$ is surjective;}
\end{equation}
\item[2.] $m < \binom{d + 1}{2}$, and for Lebesgue-a.e. $\bfalpha\in \KK$, we have
\begin{equation}
\label{rank2}
\rank\big(\tt\cdot\ff''(\bfalpha)[\ee_i,\ee_j]\big)_{1 \leq i,j \leq d} \geq 2 \all \tt\in\R^m\butnot\{\0\};
\end{equation}
\end{itemize}
then $\MM$ is of strong Khinchin type for convergence.
\end{theorem}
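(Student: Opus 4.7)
\emph{Overall strategy.} The plan is to follow the Borel--Cantelli framework of Theorems \ref{theoremBVVZ} and \ref{theoremDRV}, reducing each case to the corresponding earlier theorem by a measurable partition of $\KK$ together with a linear change of the $\bfalpha$-variables on each piece. The Khinchin-type property is preserved under countable unions and under affine reparametrizations of $\KK$ that extend to affine maps on $\R^{d+m}$, so it suffices to partition $\KK$ into countably many measurable sets on each of which a fixed linear substitution $\bfalpha\mapsto V\bfalpha$ converts the new hypothesis into the hypothesis of the earlier theorem. Both \eqref{surjective} and \eqref{rank2} are invariant under such substitutions, whereas \eqref{det1} and \eqref{DRVcond} are stated in a specific coordinate frame.

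\emph{Case 1.} The key ingredient is the following linear-algebra lemma: if $B:\Sym^2\R^d\to\R^m$ is surjective and $m\leq d$, then there exists a basis $(\vv_1,\ldots,\vv_d)$ of $\R^d$ such that the vectors $B(\vv_1\odot\vv_i)\in\R^m$, $i=1,\ldots,m$, span $\R^m$. The proof reduces to showing that the set of $\vv_1\in\R^d$ for which $\ww\mapsto B(\vv_1\odot\ww)$ fails to surject onto $\R^m$ is a proper Zariski-closed subset of $\R^d$; once such a $\vv_1$ is chosen, the remaining $\vv_i$ are obtained by pulling back a basis of $\R^m$ and extending to a basis of $\R^d$ (possible since $m<d$). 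Applied pointwise to $B=\ff''(\bfalpha)$, the lemma reduces \eqref{surjective} to \eqref{det1} in the new coordinates $\bfalpha\mapsto V\bfalpha$ with $V$ the matrix of columns $\vv_i$. A standard countable-partition device (choosing bases from a countable dense family and taking the measurable subset of $\KK$ on which each basis works) accommodates the pointwise dependence of $V$, and Theorem \ref{theoremBVVZ}(1) finishes the argument on each piece.

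\emph{Case 2 and main obstacle.} A direct reduction to Theorem \ref{theoremDRV} cannot succeed: the signature of a symmetric bilinear form is invariant under congruence, so a rank-$2$ form with eigenvalues of opposite signs cannot be transformed into one with same-sign eigenvalues. Instead, I would revisit the oscillatory-integral and dual-lattice counting estimates at the heart of the proofs of Theorems \ref{theoremBVVZ} and \ref{theoremDRV}, and prove a new measure estimate for the approximation sets near $\MM$ that uses only the bare rank-$2$ condition on $\tt\cdot\ff''(\bfalpha)$ for each $\tt\in\R^m\butnot\{\0\}$. The same-sign condition \eqref{DRVcond} was used only to extract a definite stationary-phase contribution in the DRV approach, but the oscillatory estimate should still go through on the basis of rank alone; the relaxed numerical bound $m<\binom{d+1}{2}$ is precisely the dimension count that keeps the resulting sums convergent. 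I expect this Case 2 estimate under the bare rank-$2$ condition to be the genuine new technical input and the main obstacle of the paper, whereas Case 1 and the partition machinery reduce to standard linear algebra and measure theory once the estimate is in hand.
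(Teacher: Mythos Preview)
Your Case~1 reduction contains a genuine error: the linear-algebra lemma you state is false. Take $d=5$, $m=3$, and $B=\ff''(\bfalpha)$ for the map $\ff(\alpha_1,\ldots,\alpha_5)=(\alpha_1^2,\alpha_1\alpha_2,\alpha_2^2)$ of Remark~\ref{remarkdet1}. Here $B$ is surjective, yet for \emph{every} $\vv_1\in\R^5$ the linear map $\ww\mapsto B(\vv_1\odot\ww)$ has image of dimension at most $2$ (only the first two coordinates of $\ww$ contribute), so it never surjects onto $\R^3$. Thus the ``bad'' set of $\vv_1$ is all of $\R^d$, not a proper Zariski-closed subset, and no affine change of coordinates can produce \eqref{det1}. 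This is exactly the content of Remark~\ref{remarkdet1}, and it blocks your countable-partition reduction to Theorem~\ref{theoremBVVZ}(1).

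The paper takes a different route that avoids any reduction to the earlier theorems. It first reformulates both cases as instances of a single rank hypothesis \eqref{rankk}: Case~1 is \eqref{rankk} with $k=1$ (since surjectivity of $\ff''(\bfalpha)$ is equivalent to $\tt\cdot\ff''(\bfalpha)\neq 0$ for all nonzero $\tt$), and Case~2 is \eqref{rankk} with $k=2$. It then proves a counting result (Theorem~\ref{theoremcounting}) giving $A(q,\kappa,\bftheta)\lesssim q^d\max(\kappa,\phi(q))^m$ with $\phi(q)=(q^{-1}\Log^2 q)^{k/(2m+k)}$, valid under \eqref{rankk}. The Khinchin statement follows via a Jarn\'ik-type theorem and Hausdorff--Cantelli; with $g(\rho)=\rho^n$ the relevant series converges exactly when $k(n-1)>2m$, which for $k=1$ gives $m<d-1$ and for $k=2$ gives $d>1$. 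The key new input in the proof of Theorem~\ref{theoremcounting} is an integral estimate: under \eqref{rankk}, on a compact neighbourhood one finds $k$ indices $I$ for which a $k\times k$ minor of $\tt\cdot\ff''(\bfalpha)$ is nonsingular, and a change of variables in those $k$ directions yields
\[
\int_{\KK}\prod_{i=1}^d\min\Bigl(1,\frac{1}{r\|\hh\cdot\ff'(\bfalpha)[\ee_i]\|}\Bigr)\,d\bfalpha\ \lesssim\ \Bigl(\frac{\Log r}{r}\Bigr)^{k}
\]
for each nonzero $\hh$, which plugs into the BVVZ exponential-sum bound \eqref{BVVZv2}. Your Case~2 sketch points in this direction but omits this specific mechanism; note in particular that the paper's argument treats Case~1 and Case~2 uniformly, so there is no need for a separate (and, as shown above, unavailable) reduction for Case~1.
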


We now show that this theorem is in fact a generalization of Theorems \ref{theoremBVVZ} and \ref{theoremDRV}:
\begin{proof}[Proof that Theorem \ref{theoremkhinchin} implies Theorem \ref{theoremBVVZ}]
The linear transformation $\ff''(\bfalpha):\Sym^2\R^d\to \R^m$ is surjective if and only if some $m\times m$ minor of its corresponding matrix has a nonzero determinant. Since the matrix on the left-hand side of \eqref{det1} is such a minor (since it is the matrix corresponding to the linear transformation $\ff''(\bfalpha)\given \sum_{i = 1}^m \R\ee_1 \ee_m$), \eqref{det1} implies \eqref{surjective}, and thus Case 1 of Theorem \ref{theoremBVVZ} is a special case of Case 1 of Theorem \ref{theoremkhinchin}.

Suppose that $m = 1$ and $d\geq 2$, and that \eqref{det2} holds. Then for all $\tt\in\R^m\butnot\{\0\}$ we have
\begin{align*}
\rank\big(\tt\cdot\ff''(\bfalpha)[\ee_i,\ee_j]\big)_{1\leq i,j\leq d}
&= \rank\big(f_1''(\bfalpha)[\ee_i,\ee_j]\big)_{1\leq i,j\leq d} \since{$m = 1$}\\
&= d \by{\eqref{det2}}\\
&\geq 2, \by{hypothesis}
\end{align*}
i.e. \eqref{rank2} holds. Thus, Case 2 of Theorem \ref{theoremBVVZ} is a special case of Case 2 of Theorem \ref{theoremkhinchin}.
\end{proof}

\begin{proof}[Proof that Theorem \ref{theoremkhinchin} implies Theorem \ref{theoremDRV}]
The rank of a symmetric matrix is equal to the number of nonzero eigenvalues it has. So if a matrix has at least two nonzero eigenvalues, then its rank is at least two, regardless of the sign of the eigenvalues. Thus \eqref{DRVcond} implies \eqref{rank2}, and so Theorem \ref{theoremDRV} is a special case of Case 2 of Theorem \ref{theoremkhinchin}.
\end{proof}

\begin{remark}
As in Theorem \ref{theoremBVVZ}, the hypotheses \eqref{surjective} and \eqref{rank2} are satisfiable whenever their numerical requirements are satisfied. For \eqref{surjective}, this is an immediate consequence of the implication \eqref{det1} \implies \eqref{surjective} (but see the next remark for an example that satisfies \eqref{surjective} but not \eqref{det1}). It is a little harder to see why \eqref{rank2} is satisfiable for all $m < \binom{d + 1}{2}$; we refer to Section \ref{sectiontypical} for details, specifically Theorem \ref{theoremtypical}(ii). In Section \ref{sectiontypical}, we also show that to enforce that ``almost all'' functions $\ff$ satisfy \eqref{rank2}, the stronger inequality $m \leq \binom{d}{2}$ is needed (Theorem \ref{theoremtypical}(iii,iv)).
\end{remark}

\begin{remark}
\label{remarkdet1}
It should be noted that while the condition \eqref{det1} is not invariant under affine changes of coordinates, the conditions \eqref{det2}, \eqref{surjective}, and \eqref{rank2} are. Also, there are functions $\ff$ satisfying \eqref{surjective} that do not satisfy \eqref{det1} with respect to any affine coordinate system; for example, if $m = 3$ and $d = 5$, then the function
\[
\ff(\alpha_1,\ldots,\alpha_5) = (\alpha_1^2,\alpha_1\alpha_2,\alpha_2^2)
\]
has this property.
\end{remark}

\begin{remark}
Theorems \ref{theoremBVVZ}, \ref{theoremDRV}, and \ref{theoremkhinchin} appear to be the only known results regarding manifolds of strong Khinchin type for convergence (for simultaneous approximation). However, there are some results regarding manifolds of \emph{weak} Khinchin type for convergence; for example, it was proven in \cite[Theorem 5]{VaughanVelani} (see also \cite{BVV}) that nondegenerate planar curves are of weak Khinchin type for convergence. The results of this paper do not apply to planar curves, since the parameters $d = m = 1$ do not satisfy the dimension constraints. It remains an open question whether nondegenerate planar curves (e.g. the standard parabola $\{(x,x^2):x\in\R\}$) are of strong Khinchin type for convergence.
%Regarding dual approximation, a result similar to Case 2 of Theorem \ref{theoremkhinchin} was proven by Dodson, Rynne, and Vickers \cite[Theorem 1.1]{DRV}.
\end{remark}

{\bf Outline of the paper.}
In Sections \ref{sectionjarnik} and \ref{sectioncounting} we continue to state our main results, each time reducing the main result of the previous section to the main result of the current section. Then in Section \ref{sectionproof} we prove the main result of Section \ref{sectioncounting}, and thus by implication all of the main results, using technical tools from \cite{BVVZ}. In Section \ref{sectiontypical} we discuss the significance of the hypothesis \eqref{rank2}, answering the question of how commonly it is satisfied.

In what follows, we do not give an exhaustive comparison of our results with the corresponding results in \cite{BVVZ} and \cite{DRV}; comparing Theorems \ref{theoremBVVZ} and \ref{theoremDRV} vs. Theorem \ref{theoremkhinchin} illustrates the main differences. However, we do make the observation that \cite[(5.1)]{DRV} can be interpreted as a counting result similar to our Theorem \ref{theoremcounting} and \cite[Theorems 1 and 3]{BVVZ}, though it is not phrased in the same language. Standard techniques would then yield a Jarn\'ik-type theorem which could then be compared with Theorem \ref{theoremjarnik} and \cite[Corollaries 3 and 5]{BVVZ}. We leave the details to the interested reader.\\

{\bf Acknowledgements.} The author was supported by the EPSRC Programme Grant EP/J018260/1. The author thanks Victor Beresnevich, Sanju Velani, and Evgeniy Zorin for helpful discussions.

\section{A Jarn\'ik-type result}
\label{sectionjarnik}
The \emph{Hausdorff--Cantelli lemma} \cite[Lemma 3.10]{BernikDodson} is a generalization of the Borel--Cantelli lemma and states that if $g$ is a dimension function (i.e. a nondecreasing continuous function such that $\lim_{\rho\to 0} g(\rho) = 0$) and $(B(x_i,\rho_i))_1^\infty$ is a sequence of balls such that the series $\sum_{i = 1}^\infty g(\rho_i)$ converges, then
\[
\HH^g\Big(\limsup_{i\to\infty} B(x_i,\rho_i)\Big) = 0,
\]
where $\HH^g$ denotes Hausdorff measure with respect to the gauge function $g$ (cf. \cite[\64.9]{Mattila}). As a special case, if $\psi:\N\to \Rplus$ is a function such that the series
\begin{equation}
\label{jarnikseries}
\sum_{q = 1}^\infty q^n g\left(\frac{\psi(q)}{q}\right)
\end{equation}
converges, then for all $\bftheta\in\R^n$, we have $\HH^g(\SS(\psi,\bftheta)) = 0$. As in the previous section, we will give a name to those manifolds that ``inherit'' this property from $\R^n$. Precisely, we will say that a manifold $\MM\subset\R^n$ is \emph{of strong (resp. weak) Jarn\'ik type for convergence} with respect to a dimension function $g$ if for every function (resp. monotonic function) $\psi$ such that the series \eqref{jarnikseries} converges, we have
\[
\HH^{\wbar g}(\SS(\psi,\bftheta)\cap \MM) = 0,
\]
where
\[
\wbar g(\rho) \df \frac{g(\rho)}{\rho^m}\cdot
\]
Here $m$ denotes the codimension of $\MM$. Intuitively, a manifold is of Jarn\'ik type for convergence if the ``size of $\SS(\psi,\bftheta)\cap \MM$ relative to $\MM$'' is no bigger than the ``size of $\SS(\psi,\bftheta)$ relative to $\R^n$'', as measured by the dimension function $g$. Note that a manifold is of strong (resp. weak) Khinchin type for convergence if and only if it is of strong (resp. weak) Jarn\'ik type for convergence with respect to the dimension function $g(\rho) = \rho^n$.

\begin{theorem}
\label{theoremjarnik}
Fix $d,m\in\N$, let $n = d + m$, let $\KK\subset\R^d$ be a closed rectangle, let $\ff:\KK\to\R^m$ be a function of class $\CC^2$, and let the manifold $\MM \subset \R^n$ be given by \eqref{Mfdef}. Let $g$ be a dimension function such that $\wbar g$ is increasing, and suppose that for some $k\in\N$, both of the following hold:
\begin{itemize}
\item[(I)] The series
\begin{equation}
\label{gseries}
\sum_{q = 1}^\infty q^n g\left(q^{-1}\Big(q^{-1}\log^2(q)\Big)^{\tfrac{k}{2m + k}}\right)
\end{equation}
converges;
\item[(II)] For $\HH^{\wbar g}$-a.e. $\bfalpha\in\KK$, we have
\begin{equation}
\label{rankk}
\rank\big(\tt\cdot\ff''(\bfalpha)[\ee_i,\ee_j]\big)_{1 \leq i,j \leq d} \geq k \all \tt\in\R^m\butnot\{\0\}.
\end{equation}
\end{itemize}
Then $\MM$ is of strong Jarn\'ik type for convergence with respect to the dimension function $g$.
\end{theorem}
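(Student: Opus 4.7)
The plan is to reduce Theorem \ref{theoremjarnik} to a counting estimate for rational points near $\MM$, which we expect to be the main theorem of Section \ref{sectioncounting}. Fix $\psi:\N\to\Rplus$ such that the series \eqref{jarnikseries} converges, and fix $\bftheta\in\R^n$. Setting
\[
E_q \df \bigcup_{\pp\in\ZZ^n}\{\xx\in\MM : |q\xx - \pp - \bftheta| < \psi(q)\},
\]
we have $\SS(\psi,\bftheta)\cap\MM = \limsup_q E_q$, and each nonempty connected component of $E_q$ sits inside a ball on $\MM$ of radius $O(\psi(q)/q)$ centered at a point where $\MM$ comes close to the translated rational lattice. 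Letting $N_\MM(q,\bftheta,\delta)$ denote the number of rationals $(\pp+\bftheta)/q$ lying within distance $\delta$ of $\MM$, the Hausdorff--Cantelli lemma reduces Theorem \ref{theoremjarnik} to showing
\[
\sum_{q=1}^\infty N_\MM(q,\bftheta,\psi(q)/q)\,\wbar g(\psi(q)/q) < \infty.
\]

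Heuristically, if the rationals were uniformly distributed about $\MM$, then $N_\MM(q,\bftheta,\delta) \asymp q^d(q\delta)^m = q^n\delta^m$, and the above sum collapses to $\sum q^n g(\psi(q)/q)$, which is exactly the Jarn\'ik hypothesis \eqref{jarnikseries}. In general, clustering of rationals near ``flat'' portions of $\MM$ can inflate the count, and the role of the rank hypothesis \eqref{rankk} is precisely to suppress such clustering. We therefore expect the counting theorem of Section \ref{sectioncounting} to produce a bound of the shape
\[
N_\MM(q,\bftheta,\delta) \;\ll\; q^n\delta^m \;+\; q^a\delta^b(\log q)^c,
\]
with exponents $a,b,c$ depending on $d,m,k$ and tuned so that the two terms balance exactly at the critical scale $\delta_q^\star \df (q^{-1}\log^2 q)^{k/(2m+k)}$ appearing in \eqref{gseries}. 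Given such an estimate, we split the sum according to whether $\psi(q)/q \leq \delta_q^\star$ or $\psi(q)/q > \delta_q^\star$: in the first regime the ``expected'' term $q^n\delta^m$ dominates and the contribution is bounded by $\sum q^n g(\psi(q)/q)$, which converges by \eqref{jarnikseries}; in the second regime the correction term dominates, and the monotonicity of $\wbar g$ combined with hypothesis (I) -- the convergence of \eqref{gseries} at $\delta_q^\star$ -- finishes the sum.

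The genuine obstacle is the counting estimate itself, namely showing that the rank-$k$ nondegeneracy \eqref{rankk} forces the quantitative spreading of rationals near $\MM$ stated above. This is the content of Sections \ref{sectioncounting} and \ref{sectionproof}, and will surely involve a careful deployment and optimization of the Fourier-analytic and lattice-counting machinery of \cite{BVVZ}. A secondary technical nuisance in the reduction itself is that $\psi$ is not assumed monotonic -- this is what separates strong from weak Jarn\'ik type -- so one must work on dyadic blocks $q\in[2^j,2^{j+1})$ and absorb the failure of monotonicity by passing to the supremum of $\psi$ on each block; this is standard but needs to be built into the counting statement so that the bound on $N_\MM$ depends only on the dyadic scale of $q$.
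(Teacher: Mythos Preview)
Your reduction is essentially the one the paper carries out: cover $\SS(\psi,\bftheta)\cap\MM$ by balls of radius $\asymp\psi(q)/q$ indexed by nearby shifted rationals, apply Hausdorff--Cantelli, and feed in the counting theorem of Section~\ref{sectioncounting}, whose precise form is $A(q,\kappa,\bftheta)\lesssim q^d\max(\kappa,\phi(q))^m$ with $\phi(q)=(q^{-1}\Log^2 q)^{k/(2m+k)}$. Two points, however, need correcting. First, your regime analysis is inverted: when $\psi(q)$ is \emph{small} (below the critical scale $\phi(q)$) the count is capped at $q^d\phi(q)^m$, so the \emph{correction} term dominates, and it is here that one needs the monotonicity of $\wbar g$ to bound $\wbar g(\psi(q)/q)\leq\wbar g(\phi(q)/q)$ and recover a summand of \eqref{gseries}; conversely when $\psi(q)\geq\phi(q)$ the expected term $q^d\psi(q)^m$ dominates and one lands directly on a summand of \eqref{jarnikseries}. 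The paper avoids the split altogether via the identity $\rho^m\wbar g(\rho)=g(\rho)$ applied at $\rho=\max(\psi(q),\phi(q))/q$, which gives both series at once. Second, your worry about non-monotone $\psi$ and dyadic blocks is unfounded: the counting bound holds for each individual $q$, so the Hausdorff--Cantelli sum is estimated termwise with no appeal to monotonicity---this is precisely why the theorem yields \emph{strong} Jarn\'ik type.

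One step you have not addressed is the passage from the almost-everywhere hypothesis (II) to the global counting theorem, which requires \eqref{rankk} on \emph{all} of $\KK$. The paper handles this by writing the open set $V=\{\bfalpha:\text{\eqref{rankk} holds}\}$ as a countable union of closed rectangles $\LL_i$, applying the argument on each $\MM_{\LL_i,\ff}$, and using (II) to kill $\HH^{\wbar g}(\MM_{\KK\setminus V,\ff})$.
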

\begin{remark}
If $g(\rho) = \rho^s$ for some $s > 0$, then the series \eqref{gseries} converges if and only if
\begin{equation}
\label{gseriescriterion}
s\left(1 + \frac{k}{2m + k}\right) > n + 1.
\end{equation}
%Some equivalent ways of writing this inequality are
%\[
%s > n - \frac{k(n - 1) - 2m}{2(m + k)}
%\]
%and
%\[
%s > \frac{n + 1}{2} + \frac{m(n + 1)}{2(m + k)}\cdot
%\]
\end{remark}
\begin{proof}[Proof of Theorem \ref{theoremkhinchin} using Theorem \ref{theoremjarnik}]
First note that the case $k = 1$ of \eqref{rankk} is equivalent to \eqref{surjective}; indeed,
\begin{align*}
\text{\eqref{rankk} holds with $k = 1$}
&\;\;\Leftrightarrow\;\; \tt\cdot\ff''(\bfalpha) \neq \0 \all \tt\in\R^m\butnot\{\0\}\\
&\;\;\Leftrightarrow\;\; \text{$(\ff''(\bfalpha))^T$ is injective}\\
&\;\;\Leftrightarrow\;\; \text{$\ff''(\bfalpha)$ is surjective}.\end{align*}
Now let $g(\rho) = \rho^n$, so that $\wbar g(\rho) = \rho^d$. Then in Case 1 (resp. Case 2) of Theorem \ref{theoremkhinchin}, \eqref{rankk} is satisfied with $k = 1$ (resp. $k = 2$). On the other hand,
\begin{align*}
\text{\eqref{gseries} converges}
&\;\;\Leftrightarrow\;\; \text{\eqref{gseriescriterion} holds with $s = n$}
\;\;\Leftrightarrow\;\; n + \frac{nk}{2m + k} > n + 1\\
&\;\;\Leftrightarrow\;\; 2m < k(n - 1)
\;\;\Leftrightarrow\;\; \begin{cases}
m < d - 1 & k = 1\\
d > 1 & k = 2
\end{cases}\cdot
\end{align*}
So the convergence of \eqref{gseries} with $k = 1$ (resp. $k = 2$) is guaranteed by the appropriate numerical hypothesis of Case 1 (resp. Case 2) of Theorem \ref{theoremkhinchin}.
\end{proof}

\section{A counting result}
\label{sectioncounting}
The proof of Theorem \ref{theoremjarnik} is based on a counting result that is interesting in its own right. Throughout this section, we fix $d,m\in\N$, a closed rectangle $\KK \subset \R^d$, and a function $\ff:\KK\to\R^m$ of class $\CC^2$. Now for each $q\in\N$, $\kappa > 0$, and $\bftheta\in\R^n$, we write $\bftheta = (\bflambda,\bfgamma)\in \R^d\times\R^m$, we consider the set
\begin{align*}
&\RR(q,\kappa,\bftheta)
= \RR_{\KK,\ff}(q,\kappa,\bftheta)
\df \left\{(\aa,\bb)\in\Z^d\times\Z^m :
\dfrac{\aa + \bflambda}{q}\in \KK,\;\;
\left|\ff\left(\dfrac{\aa + \bflambda}{q}\right) - \frac{\bb + \bfgamma}{q}\right| < \frac{\kappa}{q}
\right\},
\end{align*}
and we let $A(q,\kappa,\bftheta) = A_{\KK,\ff}(q,\kappa,\bftheta) = \#\RR(q,\kappa,\bftheta)$.

\begin{convention*}
The notation $A\lesssim B$ means that there exists a constant $C \geq 1$ (the \emph{implied constant}), depending only on universal variables such as $d$, $m$, $\KK$, and $\ff$ (but not on $q$, $\kappa$, and $\bftheta$), such that $A\leq C B$. The notation $A \asymp B$ means $A \lesssim B \lesssim A$. The notation $A \asymp_\plus B$ means that there exists an implied constant $C \geq 0$ such that $A - C \leq B \leq A + C$. 
\end{convention*}

\begin{theorem}
\label{theoremcounting}
Fix $k\in\N$, and suppose that \eqref{rankk} holds for all $\bfalpha\in \KK$. Then for all $q\in\N$, $\kappa > 0$, and $\bftheta\in\R^n$, we have
\begin{equation}
\label{counting}
A(q,\kappa,\bftheta) \lesssim q^d \max(\kappa,\phi(q))^m,
\end{equation}
where $\phi(q) = (q^{-1} \Log^2(q))^{\tfrac{k}{2m + k}}$. Here and hereafter we use the notation
\[
\Log(q) \df \max(1,\log(q)).
\]
\end{theorem}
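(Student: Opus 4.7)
\emph{Proof proposal.} The plan is to carry out a Fourier-analytic lattice-point count in the spirit of \cite{BVVZ}, decomposing $A(q,\kappa,\bftheta)$ into an expected main term of size $\asymp q^d\kappa^m$ plus oscillatory error terms that are controlled using the rank hypothesis \eqref{rankk}.

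I would first fix a smooth nonnegative bump function $\chi:\R^m\to\Rplus$ that majorizes the indicator of the unit box $[-1,1]^m$ and whose Fourier transform decays rapidly, and then use a rescaled copy of $\chi$ to majorize the indicator of the box of radius $\kappa/q$ appearing in the definition of $\RR(q,\kappa,\bftheta)$. This converts $A(q,\kappa,\bftheta)$ into a smoothed weighted double sum over $\aa\in\Z^d$ and $\bb\in\Z^m$. Applying Poisson summation in the $\bb$ variable rewrites the inner sum over $\bb$ as a sum over dual frequencies $\hh\in\Z^m$: the zero-frequency contribution produces the expected main term of size $\asymp q^d\kappa^m$, while each $\hh\neq\0$ contributes, up to a weight coming from $\widehat{\chi}$ at a rescaled point, an exponential sum of the shape
\[
S(\hh) = \sum_{\aa\,:\,(\aa+\bflambda)/q\,\in\,\KK} e^{-2\pi\ic q\hh\cdot\ff((\aa+\bflambda)/q)}.
\]

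The rank hypothesis \eqref{rankk} (applied with $\tt=\hh$) is tailored precisely to control these oscillatory sums: the Hessian of the phase $q\hh\cdot\ff$ equals $q\,\hh\cdot\ff''$, which has rank at least $k$ uniformly on $\KK$. I would then borrow the multivariate exponential-sum machinery of \cite{BVVZ} to bound $|S(\hh)|$ by interpolating between the trivial bound $|S(\hh)|\leq q^d$ and a sharper stationary-phase estimate coming from the rank-$k$ curvature, via a dyadic decomposition in $|\hh|$. Summing the resulting estimates against the Fourier weight (which effectively truncates the sum at the natural scale $|\hh|\lesssim q/\kappa$) produces a total non-zero-frequency contribution of size $\lesssim q^d\phi(q)^m$, with the logarithms appearing in $\phi(q)$ accounting for losses from the dyadic summation and from minor boundary corrections when $\KK$ is cut by the shifted lattice $(\Z^d-\bflambda)/q$. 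Combining with the main term and taking the maximum of the two regimes (large $\kappa$ vs.\ small $\kappa$) would then yield the claimed bound.

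The main obstacle is the exponential-sum estimate for $S(\hh)$: one must obtain precisely the decay rate needed for $\phi(q)$ to have the shape $(q^{-1}\Log^2(q))^{k/(2m+k)}$. A one-shot application of van der Corput to a phase whose Hessian has rank $k$ gives decay only of the form $(q|\hh|)^{-k/2}$, which on its own does not produce the exponent $k/(2m+k)$; the correct scaling emerges only from a careful balance of a ``structural'' bound (from the rank condition) against a ``counting'' bound (from volume considerations) carried out at each dyadic level of $|\hh|$, with the log losses coming from summing over these dyadic scales and from Lipschitz-type corrections near $\partial\KK$. Adapting this balance to exploit merely the rank-$k$ hypothesis \eqref{rankk}, rather than the more restrictive non-degeneracy assumptions \eqref{det1} and \eqref{det2} in \cite{BVVZ}, is the technical heart of the proof and is what makes Theorem \ref{theoremkhinchin} a genuine strengthening of Theorem \ref{theoremBVVZ}.
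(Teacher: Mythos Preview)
Your high-level outline is in the right spirit (harmonic analysis to separate a main term $q^d\kappa^m$ from oscillatory errors indexed by $\hh\in\Z^m$), but the mechanism you propose for handling the errors is not the one that actually produces the exponent $k/(2m+k)$, and as written the proposal has a genuine gap.

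The paper does not bound the exponential sum $S(\hh)$ by stationary phase or van der Corput applied to the rank-$k$ Hessian. Instead it first runs a Weyl--van der Corput linearization step (this is Lemma~\ref{lemmaBVVZ}, essentially \cite[(2.27)--(2.28)]{BVVZ}): one approximates $\ff$ linearly on windows of radius $r/q$ with $r\asymp(\delta q\kappa)^{1/2}$, the scale being chosen so that the quadratic remainder $O(r^2/q)$ is absorbed into the box width $\kappa$. Averaging over the window centre $\bfalpha\in\KK$ yields
\[
A(q,\kappa,\bftheta)\;\lesssim\;q^d\,\frac{1}{H^m}\sum_{|\hh|\le H}\int_\KK\prod_{i=1}^d\min\Bigl(1,\tfrac{1}{r\|\hh\cdot\ff'(\bfalpha)[\ee_i]\|}\Bigr)\,\dee\bfalpha,
\qquad H\asymp 1/\kappa.
\]
The rank hypothesis \eqref{rankk} is then exploited \emph{not} as a curvature bound on an oscillatory sum, but as a change of variables in this $\bfalpha$-integral: by compactness of $\KK\times\partial[-1,1]^m$ one can, for every direction $\tt=\hh/|\hh|$, select a $k\times k$ minor of $\tt\cdot\ff''(\bfalpha)$ with determinant bounded away from zero, and use the map $\bfbeta\mapsto(\tt\cdot\ff'(\bfbeta,\what\bfbeta)[\ee_i])_{i\in I}$ as a local diffeomorphism on the corresponding $k$ coordinates. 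This reduces the integral to $\bigl(\int_{-R}^R\min(1,1/(r\|\eta z\|))\,\dee z\bigr)^k\asymp(\Log(r)/r)^k$, whence
\[
A(q,\kappa,\bftheta)\;\lesssim\;q^d\Bigl(H^{-m}+(\Log(r)/r)^k\Bigr)\;\asymp\;q^d\Bigl(\kappa^m+\tfrac{\Log^k(q)}{(q\kappa)^{k/2}}\Bigr),
\]
and the balance point $\kappa^{m+k/2}=q^{-k/2}\Log^k(q)$ is exactly $\kappa=\phi(q)$.

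Your proposal skips the linearization scale $r$ entirely and hopes that ``a careful balance of a structural bound against a counting bound carried out at each dyadic level of $|\hh|$'' will produce $k/(2m+k)$; but you yourself note that a direct rank-$k$ van der Corput bound on $S(\hh)$ gives the wrong decay, and no concrete substitute is offered. (Also, the Fourier truncation from $\widehat\chi(\kappa\hh)$ is at $|\hh|\lesssim 1/\kappa$, not $q/\kappa$.) The missing idea is precisely the two-parameter scheme $(H,r)$ together with the change-of-variables use of \eqref{rankk}; without it the argument does not close.
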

\begin{proof}[Proof of Theorem \ref{theoremjarnik} using Theorem \ref{theoremcounting}]
First consider the case where \eqref{rankk} holds for all $\bfalpha\in \KK$. Then there exist a rectangle $\LL\subset\R^d$ whose interior contains $\KK$ and an extension of $\ff$ to $\LL$ such that \eqref{rankk} holds for all $\bfalpha\in\LL$. Let $C_1 = 1 + \max_{\bfalpha\in \LL} |\ff'(\bfalpha)|$, and fix $\psi$ such that \eqref{jarnikseries} converges. It is not hard to see that
\[
\SS(\psi,\bftheta) \cap \MM \subset \limsup_{q\to\infty} \bigcup_{(\aa,\bb)\in \RR_{\LL,\ff}(q,C_1\psi(q),\bftheta)} B\left(\left(\frac{\aa + \bflambda}{q},\frac{\bb + \bfgamma}{q}\right),\frac{\psi(q)}{q}\right),
\]
where the ball is taken with respect to the max norm. So by the Hausdorff--Cantelli lemma, if the series
\[
\sum_{q = 1}^\infty A_{\LL,\ff}(q,C_1\psi(q),\bftheta) \; \wbar g\left(\frac{\psi(q)}{q}\right)
\]
converges then $\HH^{\wbar g}(\SS(\psi,\bftheta)\cap \MM) = 0$. And indeed, by Theorem \ref{theoremcounting},
\begin{align*}
\sum_{q = 1}^\infty A_{\LL,\ff}(q,C_1\psi(q),\bftheta) \; \wbar g\left(\frac{\psi(q)}{q}\right)
&\lesssim \sum_{q = 1}^\infty q^d \max(C_1 \psi(q),\phi(q))^m \; \wbar g\left(\frac{\psi(q)}{q}\right)\\
&\lesssim\sum_{q = 1}^\infty q^n \max\left(\frac{\psi(q)}{q},\frac{\phi(q)}{q}\right)^m \wbar g\left(\max\left(\frac{\psi(q)}{q},\frac{\phi(q)}{q}\right)\right)\\
&\leq \sum_{q = 1}^\infty q^n \left[g\left(\frac{\psi(q)}{q}\right) + g\left(\frac{\phi(q)}{q}\right)\right]\\
&\asymp_\plus \eqref{jarnikseries} + \eqref{gseries} < \infty,
\end{align*}
which completes the proof in this case.

%The next part is the same as \cite[Step 2 on p.14]{BVVZ}, but we include it for completeness. 
%For the general case, by contradiction suppose that $\HH^{\wbar g}(\SS(\psi,\bftheta)\cap \MM) > 0$, and let $\bfalpha$ be a point satisfying \eqref{rankk} such that $(\bfalpha,\ff(\bfalpha))$ is in the topological support of $\mu \df \HH^{\wbar g}\given \SS(\psi,\bftheta)\cap \MM$. Such a point must exist because the topological support of $\mu$ has positive $\HH^{\wbar g}$ measure, whereas the set of points such that \eqref{rankk} fails has zero $\HH^{\wbar g}$ measure. Now let $\LL\subset\KK$ be a rectangular neighborhood of $\bfalpha$ such that \eqref{rankk} holds on $\LL$. By the previous argument $\HH^{\wbar g}(\SS(\psi,\bftheta)\cap \MM_{\LL,\ff}) = 0$, but this contradicts the assumption that $(\bfalpha,\ff(\bfalpha))$ is in the topological support of $\mu$.

For the general case, we proceed to re-use the argument given in \cite[Step 2 on p.17]{BVVZ}: Let $V$ be the set of points $\bfalpha\in\KK$ such that \eqref{rankk} holds. Since $V$ is open, it can be written as the union of countably many rectangles, say $V = \bigcup_{i = 1}^\infty \LL_i$. For each $i$, the previous argument shows that $\HH^{\wbar g}(\SS(\psi,\bftheta)\cap \MM_{\LL_i,\ff}) = 0$. On the other hand, by assumption (II) we have $\HH^{\wbar g}(\MM_{\KK\butnot V,\ff}) = 0$. Taking the union gives $\HH^{\wbar g}(\SS(\psi,\bftheta)\cap \MM_{\KK,\ff}) = 0$.
\end{proof}

\section{Proof of Theorem \ref{theoremcounting}}
\label{sectionproof}

The following lemma is a reformulation of the main technical result of \cite{BVVZ}. We provide the proof for completeness.

\begin{lemma}[Cf. {\cite[(2.27) and (2.28)]{BVVZ}}]
\label{lemmaBVVZ}
Let the notation be as in Theorem \ref{theoremcounting}. Then for $\delta > 0$ sufficiently small, independent of $q$, $\kappa$, and $\bftheta$, we have
\begin{equation}
\label{BVVZv2}
A(q,\kappa,\bftheta) \lesssim q^d \frac{1}{H^m} \sum_{\substack{\hh\in\Z^m \\ |\hh| \leq H}} \int_{\KK} \prod_{i = 1}^d \min\left(1,\frac{1}{r\|\hh\cdot \ff'(\bfalpha)[\ee_i]\|}\right) \; \dee \bfalpha \text{ if $H,r \geq 1$},
\end{equation}
where
\begin{align*}
H &\df \left\lfloor\frac{1}{4\kappa}\right\rfloor,&
r &\df \lfloor (\delta q \kappa)^{1/2}\rfloor.
\end{align*}
\end{lemma}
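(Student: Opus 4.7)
The plan is to establish \eqref{BVVZv2} by a two-step Fourier-analytic argument. First I would dualize the $\bb$-variable, replacing the sharp constraint $|\ff(\cdot)-(\bb+\bfgamma)/q|<\kappa/q$ with a band-limited majorant to obtain a family of exponential sums in $\aa$ indexed by a dual frequency $\hh\in\Z^m$ with $|\hh|\leq H$. Then I would estimate each such exponential sum by partitioning $\KK$ into subrectangles of side $r/q$ on which $\ff$ is well-approximated by its linearization, so that the inner sum factors across coordinates into one-dimensional geometric sums.

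For the first step, fix a nonnegative Beurling--Selberg-type function $w:\R\to[0,\infty)$ with $w\geq \mathbf{1}_{[-\kappa,\kappa]}$, with $\widehat{w}$ supported in $[-H,H]$, and with $\|\widehat{w}\|_\infty\lesssim 1/H$; such a $w$ exists precisely because $H\kappa\asymp 1$ in our regime ($H\geq 1$). Starting from
\[
A(q,\kappa,\bftheta)\leq \sum_{\aa:\,(\aa+\bflambda)/q\in\KK}\,\prod_{j=1}^m \sum_{b_j\in\Z} w\!\left(q f_j\!\left(\tfrac{\aa+\bflambda}{q}\right)-b_j-\gamma_j\right),
\]
Poisson summation applied to each inner $b_j$-sum, combined with the support of $\widehat{w}$, transforms each such sum into a finite exponential expansion over $|h_j|\leq H$ with coefficients of size $\lesssim 1/H$. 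Expanding the product over $j$, interchanging orders of summation, and taking absolute values yields
\[
A(q,\kappa,\bftheta)\lesssim H^{-m}\sum_{\substack{\hh\in\Z^m\\|\hh|\leq H}}\Bigg|\sum_{\aa}e\!\left(q\,\hh\cdot\ff\!\left(\tfrac{\aa+\bflambda}{q}\right)\right)\Bigg|,
\]
the outer $\aa$-sum restricted to $(\aa+\bflambda)/q\in\KK$.

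For the second step, partition $\KK$ into subrectangles of side $r/q$ (possible since $r\geq 1$). On a subrectangle with center $\bfalpha_0$, Taylor-expand $\ff(\bfalpha)=\ff(\bfalpha_0)+\ff'(\bfalpha_0)(\bfalpha-\bfalpha_0)+O(|\bfalpha-\bfalpha_0|^2)$. The quadratic remainder contributes at most a phase of size $q\,|\hh|\,(r/q)^2 \leq H r^2/q \leq \delta/4$, harmless after choosing $\delta$ small. After this linearization, the $\aa$-sum over each subrectangle factors as a product of one-dimensional geometric sums, each bounded by $\min\!\bigl(r,\,\|\hh\cdot\ff'(\bfalpha_0)[\ee_i]\|^{-1}\bigr)$, so the box contribution is
\[
\lesssim r^d\prod_{i=1}^d\min\!\left(1,\,\frac{1}{r\,\|\hh\cdot\ff'(\bfalpha_0)[\ee_i]\|}\right).
\]
Summing over the $\asymp (q/r)^d$ subrectangles and recognizing the result as a Riemann sum with mesh $r/q$ for $q^d \int_\KK\prod_i\min(1,(r\|\hh\cdot\ff'(\bfalpha)[\ee_i]\|)^{-1})\,\dee\bfalpha$ — the uniform continuity of $\ff'$ (from $\ff\in\CC^2$) justifying the Riemann-to-integral passage up to a bounded constant, and the $\min(1,\cdot)$ clipping absorbing the small-denominator singularities — produces exactly the right-hand side of \eqref{BVVZv2}.

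The main obstacle is calibrating $r$ in the linearization: the choice $r\asymp(\delta q\kappa)^{1/2}\asymp(\delta q/H)^{1/2}$ is forced precisely by demanding that the quadratic Taylor error contribute a phase of at most $\delta/4$, via $|\hh|\cdot r^2/q\leq H r^2/q \leq \delta/4$, which is exactly the scaling between $r$, $H$, and $q\kappa$ written into the lemma's statement. The Beurling--Selberg construction of the majorant $w$ and the Riemann-sum-to-integral conversion are standard once this scaling is fixed.
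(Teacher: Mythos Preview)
Your plan has the right ingredients --- dualize the $\bb$-constraint via a band-limited majorant, linearize $\ff$ on scale $r/q$, factor into one-dimensional geometric sums --- but the order of operations creates a real gap. Once you have dualized and are holding an exponential sum $\sum_\aa e\bigl(q\hh\cdot\ff((\aa+\bflambda)/q)\bigr)$, linearizing the phase on a box perturbs each term's phase by $O(\delta)$, so
\[
\Bigl|\sum_{\aa\in\mathrm{box}} e\bigl(q\hh\cdot\ff(\cdot)\bigr)\Bigr| \;\leq\; \prod_{i=1}^d \min\bigl(r,\|\hh\cdot\ff'(\bfalpha_0)[\ee_i]\|^{-1}\bigr) \;+\; O(\delta r^d).
\]
That additive $O(\delta r^d)$ is not ``harmless'': summed over $\asymp(q/r)^d$ boxes and then over $|\hh|\leq H$ (against the $H^{-m}$ prefactor), it contributes $O(\delta q^d)$ to your bound for $A(q,\kappa,\bftheta)$, whereas the right-hand side of \eqref{BVVZv2} can be as small as $\asymp q^d\kappa^m$ (from the $\hh=\0$ term alone). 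Since $\delta$ is fixed and $\kappa$ may be arbitrarily small, the error swamps the target. Rescuing the per-box product bound via iterated partial summation would require controlling $d$-fold mixed differences of $e(\epsilon)$, which for $d\geq 3$ demands more than the $\CC^2$ regularity assumed.

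The paper avoids this by reversing the two steps. For each $\bfalpha\in\KK$ it first linearizes $\ff$ on $B(\bfalpha,r/q)$: the Taylor error $\lesssim r^2/q \leq \delta\kappa$ now lives in the \emph{constraint} $\|q\ff(\cdot)-\bfgamma\|<\kappa$, not in a phase, and merely relaxes the cutoff to $2\kappa\leq(2H)^{-1}$ --- a harmless factor of two. Only then is the now exactly linear constraint dualized via the Fej\'er kernel, so the resulting sum over $|\vv|\leq r$ factors with no remainder. Finally, instead of summing over box-centers and passing to a Riemann integral, the paper integrates the local count over $\bfalpha\in\KK$: the left side becomes $\int_\KK\#\bigl(B(\bfalpha,r/q)\cap\SS(q,\kappa,\bftheta)\bigr)\,\dee\bfalpha\asymp A(q,\kappa,\bftheta)\,(r/q)^d$, and the right side is already the integral in \eqref{BVVZv2}.
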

\begin{proof}
In what follows we assume that $H,r \geq 1$. Let $e$ denote the 1-periodic exponential function $e(x) = \exp(2\pi\ic x)$. We will need the following estimates, valid for all $x\in\R$ and $H\in\N$:
\begin{align} \label{expsumbounds1}
\sum_{h = -H}^H (H - |h|) e(hx)
= \left(\frac{\sin(H\pi x)}{\sin(\pi x)}\right)^2
&\geq \left(\frac{2H}{\pi}\right)^2\Big[\|x\| \leq (2H)^{-1}\Big]\\ \label{expsumbounds2}
\sum_{h = -H}^H e(hx)
= \frac{\sin\big((2H + 1)\pi x\big)}{\sin(\pi x)}
&\leq \min\left(2H + 1,\frac{1}{2\|x\|}\right).
\end{align}
Here, the right-hand side of \eqref{expsumbounds1} is written using Iverson bracket notation. Now let $A:\R^d\to\R^m$ be a linear transformation and fix $\yy\in\R^m$. We have
\begin{align*}
\sum_{\substack{\vv\in\Z^d \\ |\vv| \leq r}} \Big[\|A[\vv] + \yy\| \leq (2H)^{-1}\Big]
&= \sum_{\substack{\vv\in\Z^d \\ |\vv| \leq r}} \prod_{j = 1}^m \Big[\|\ee_j\cdot (A[\vv] + \yy)\| \leq (2H)^{-1}\Big]\\
&\lesssim \sum_{\substack{\vv\in\Z^d \\ |\vv| \leq r}} \prod_{j = 1}^m \frac{1}{H} \sum_{h = -H}^H \frac{H - |h|}{H} e\big(h\ee_j\cdot (A[\vv] + \yy)\big) \by{\eqref{expsumbounds1}}\\
&= \frac{1}{H^m}\sum_{\substack{\hh\in \Z^m \\ |\hh| \leq H}} \left(\prod_{j = 1}^m \frac{H - |h_j|}{H}\right) \sum_{\substack{\vv\in\Z^d \\ |\vv| \leq r}} e\big(\hh\cdot (A[\vv] + \yy)\big)\\
&\leq \frac{1}{H^m}\sum_{\substack{\hh\in \Z^m \\ |\hh| \leq H}} \left|\sum_{\substack{\vv\in\Z^d \\ |\vv| \leq r}} e\big(\hh\cdot (A[\vv] + \yy)\big)\right|\\
&= \frac{1}{H^m}\sum_{\substack{\hh\in \Z^m \\ |\hh| \leq H}} \prod_{i = 1}^d \sum_{v = -r}^r e\big(\hh\cdot A[v\ee_i]\big)\\
&\leq \frac{1}{H^m}\sum_{\substack{\hh\in \Z^m \\ |\hh| \leq H}} \prod_{i = 1}^d \min\left(2r + 1,\frac{1}{2\|\hh\cdot A[\ee_i]\|}\right). \by{\eqref{expsumbounds2}}\\
\end{align*}
Now consider a point $\bfalpha\in\KK$, and let $\aa\in\Z^d$ be chosen so that
\begin{equation}
\label{openball}
\frac{\aa + \bflambda}{q} \in \KK \cap B^\circ\left(\bfalpha,\frac{1}{q}\right).
\end{equation}
Here $B^\circ(\bfalpha,1/q)$ denotes the open ball around $\bfalpha$ of radius $1/q$. Such an $\aa$ exists as long as the sides of $\KK$ all have length at least $1/q$, which happens for all sufficiently large $q$. (Small values of $q$ can be dealt with by making $\delta$ smaller.)

Let $\yy = q\ff\big(\frac{\aa + \bflambda}{q}\big)$ and $A = \ff'(\bfalpha)$. Using elementary calculus, one can show that for all $\vv\in\Z^d$ with $|\vv| \leq r$, we have
\[
\left|q\ff\left(\frac{\aa + \vv + \bflambda}{q}\right) - (A[\vv] + \yy)\right| \lesssim \frac{r^2}{q} \leq \delta\kappa,
\]
assuming that $\frac{\aa + \vv + \bflambda}{q} \in \KK$. So if $\delta$ is chosen small enough (depending on $\ff$), then
\[
\left|q\ff\left(\frac{\aa + \vv + \bflambda}{q}\right) - (A[\vv] + \yy)\right| \leq \kappa,
\]
and thus since $2\kappa \leq (2H)^{-1}$,
\begin{equation}
\label{whereweleftoff}
\sum_{\substack{\vv\in \Z^d \\ \vv \leq r}} \left[\left\|q\ff\left(\frac{\aa + \vv + \bflambda}{q}\right)\right\| \leq \kappa\right] \lesssim \frac{1}{H^m}\sum_{\substack{\hh\in \Z^m \\ |\hh| \leq H}} \prod_{i = 1}^d \min\left(2r + 1,\frac{1}{2\|\hh\cdot \ff'(\bfalpha)[\ee_i]\|}\right).
\end{equation}
Now let
\[
\SS(q,\kappa,\bftheta) = \left\{\frac{\aa + \bflambda}{q} : (\aa,\bb)\in\RR(q,\kappa,\bftheta)\right\}.
\]
Since by assumption $H\geq 1$, we have $\kappa \leq 1/4$ and thus $A(q,\kappa,\bftheta) = \#\SS(q,\kappa,\theta)$. On the other hand, for all $\vv\in\Z^d$ such that $\frac{\aa + \vv + \bflambda}{q} \in B(\bfalpha,r/q)$, \eqref{openball} implies that $|\vv| < r + 1$ and thus that $|\vv| \leq r$. Thus, \eqref{whereweleftoff} implies that
\[
\#\big(B(\bfalpha,r/q)\cap\SS(q,\kappa,\bftheta)\big) \lesssim r^d \frac{1}{H^m}\sum_{\substack{\hh\in \Z^m \\ |\hh| \leq H}} \prod_{i = 1}^d \min\left(1,\frac{1}{r\|\hh\cdot \ff'(\bfalpha)[\ee_i]\|}\right).
\]
Integrating over all $\bfalpha\in\KK$ gives
\[
\sum_{\bfbeta\in \SS(q,\kappa,\bftheta)} \lambda\big(\KK\cap B(\bfbeta,r/q)\big) \lesssim r^d \frac{1}{H^m}\sum_{\substack{\hh\in \Z^m \\ |\hh| \leq H}} \int_\KK \prod_{i = 1}^d \min\left(1,\frac{1}{r\|\hh\cdot \ff'(\bfalpha)[\ee_i]\|}\right) \;\dee\bfalpha,
\]
where $\lambda$ denotes Lebesgue measure. Since $\lambda\big(\KK\cap B(\bfbeta,r/q)\big) \asymp (r/q)^d$ for all $\bfbeta\in \SS(q,\kappa,\bftheta)$, rearranging completes the proof.
\end{proof}

We are now ready to prove Theorem \ref{theoremcounting}:

\begin{proof}[Proof of Theorem \ref{theoremcounting} using Lemma \ref{lemmaBVVZ}]
Let $\Omega = \KK \times \del [-1,1]^m$. For each $(\bfalpha,\tt) \in \Omega$ and $I,J \subset \{1,\ldots,d\}$ such that $\#(I) = \#(J) = k$, let $M_{I,J}(\bfalpha,\tt)$ denote the $k\times k$ matrix
\[
M_{I,J}(\bfalpha,\tt) \df \big(\tt\cdot\ff''(\bfalpha)[\ee_i,\ee_j]\big)_{i\in I,\,j\in J},
\]
i.e. $M_{I,J}(\bfalpha,\tt)$ is the $k\times k$ minor of the $d\times d$ matrix $\big(\tt\cdot\ff''(\bfalpha)[\ee_i,\ee_j]\big)_{1\leq i,j\leq d}$ for which $I$ is the set of retained rows and $J$ is the set of retained columns.

Now fix $(\bfalpha_0,\tt_0) \in \Omega$. By \eqref{rankk}, there exist $I,J \subset \{1,\ldots,d\}$ with $\#(I) = \#(J) = k$ such that $\det(M_{I,J}(\bfalpha_0,\tt_0)) \neq 0$. Let $C(\bfalpha_0,\tt_0)$ be a convex neighborhood of $M_{I,J}(\bfalpha_0,\tt_0)$ on which the determinant function is bounded away from zero. Since $M_{I,J}(\bfalpha,\tt)$ depends continuously on $(\bfalpha,\tt)$, there exists a neighborhood $U = U(\bfalpha_0,\tt_0) \subset \Omega$ of $(\bfalpha_0,\tt_0)$ such that for all $(\bfalpha,\tt) \in U$, we have
\begin{equation}
\label{MIJbounds}
M_{I,J}(\bfalpha,\tt) \in C(\bfalpha_0,\tt_0).
\end{equation}
Without loss of generality, we may assume that $U$ is of the form $U = V_1\times\cdots\times V_d\times W$, where $V_i = V_i(\bfalpha,\tt) \subset \KK_i$ and $W = W(\bfalpha,\tt) \subset \del [-1,1]^m$. Here $(\KK_i)_1^d$ denote the factors of $\KK$, so that $\KK = \KK_1\times\cdots\times\KK_d$. We can also assume that the sets $V_1,\ldots,V_d$ are intervals. Now since $\Omega = \KK\times \del [-1,1]^m$ is compact, there exists a finite set $F\subset \Omega$ such that the collection $\{U(\bfalpha_0,\tt_0) : (\bfalpha_0,\tt_0)\in F\}$ covers $\Omega$.

Now fix $(\bfalpha_0,\tt_0)\in F$, let the notation be as above, and let
\begin{align*}
\what J &\df \{1,\ldots,d\}\butnot J,&
V &\df \prod_{j\in J} V_j,&
\what V &\df \prod_{j\in \what J} V_j.
\end{align*}
Fix $\tt\in W$ and $\what\bfbeta\in \what V$, and consider the map
\[
\bfPhi_{\what\bfbeta} : V \ni \bfbeta \mapsto (\tt \cdot \ff'(\bfbeta,\what\bfbeta) [\ee_i])_{i\in I} \in \R^I.
\]
%where $(\bfbeta,\what\bfbeta)_j \df \begin{cases} \beta_j & j \in J \\ \what\beta_j & j\in \what J\end{cases}$.
By \eqref{MIJbounds} and the convexity of $C(\bfalpha_0,\tt_0)$, the map $\bfPhi_{\what\bfbeta}$ is invertible and its Jacobian determinant is bounded away from zero. So
\[
(\bfPhi_{\what\bfbeta})_*[\lambda_V] \lesssim \lambda_{[-R,R]^I},
\]
where $\lambda_S$ denotes Lebesgue measure on a set $S$, and $R > 0$ is sufficiently large. By integrating with respect to $\what\bfbeta$, we get
\begin{equation}
\label{glowerstar}
\bfPhi_*[\lambda_{V\times \what V}] \lesssim \lambda_{[-R,R]^I},
\end{equation}
where $\bfPhi(\bfalpha) = (\tt \cdot \ff'(\bfalpha) [\ee_i])_{i\in I}$.

Now fix $q\in\N$, $\kappa > 0$, and $\bftheta\in\R^n$, let $\delta > 0$ and $H,r\in\N$ be as in Lemma \ref{lemmaBVVZ}, and assume that $H,r\geq 1$. Fix $\hh\in \Z^m$ such that $0 < |\hh| \leq H$. Let $\eta = |\hh| \geq 1$ and $\tt = \eta^{-1} \hh$, and fix $(\bfalpha_0,\tt_0)\in F$ such that $\tt\in W(\bfalpha_0,\tt_0)$. Letting the notation be as above, we have
\begin{align*}
\int_{V\times \what V} \prod_{i = 1}^d \min\left(1,\frac{1}{r \|\hh \cdot \ff'(\bfalpha) [\ee_i]\|}\right) \;\dee\bfalpha
&\leq \int_{V\times \what V} \prod_{i \in I} \min\left(1,\frac{1}{r \|\eta\tt \cdot \ff'(\bfalpha) [\ee_i]\|}\right) \;\dee\bfalpha \noreason\\
&= \int \prod_{i \in I} \min\left(1,\frac{1}{r \|\eta z_i\|}\right) \;\dee \bfPhi_*[\lambda_{V\times \what V}](\zz) \noreason\\
&\lesssim \int \prod_{i \in I} \min\left(1,\frac{1}{r \|\eta z_i\|}\right) \;\dee\lambda_{[-R,R]^I}(\zz) \by{\eqref{glowerstar}}\\
&= \left(\int_{-R}^R \min\left(1,\frac{1}{r\|\eta z\|}\right) \;\dee z\right)^k \since{$\#(I) = k$}\\
\int_{-R}^R \min\left(1,\frac{1}{r\|\eta z\|}\right) \;\dee z
&= \frac{1}{\eta} \int_{-\eta R}^{\eta R} \min\left(1,\frac{1}{r\|z\|}\right) \;\dee z\\
&\leq \frac{2}{\eta} \int_0^{\lceil \eta R\rceil} \min\left(1,\frac{1}{r\|z\|}\right) \;\dee z\\
&= \frac{4\lceil \eta R\rceil}{\eta} \int_0^{1/2} \min\left(1,\frac{1}{r z}\right)\;\dee z\\
&\asymp \int_0^{1/2} \min\left(1,\frac{1}{r z}\right)\;\dee z \since{$\eta \geq 1$ and $R\asymp 1$}\\
% \;\;\; \since{$\eta \geq 1$ and $R\asymp 1$}\\
&\asymp \frac{\Log(r)}{r}\cdot \since{$r\geq 1$}
\end{align*}
Taking the sum over all $(\bfalpha_0,\tt_0)\in F$ such that $\tt\in W(\bfalpha_0,\tt_0)$ gives
\[
\int_{\KK} \prod_{i = 1}^d \min\left(1,\frac{1}{r \|\hh \cdot \ff'(\bfalpha) [\ee_i]\|}\right) \;\dee\bfalpha \lesssim \left(\frac{\Log(r)}{r}\right)^k.
\]
Summing over all $\hh\in\Z^m$ such that $0 < |\hh| \leq H$ and adding 1 to both sides gives
\[
\sum_{\substack{\hh\in \Z^m \\ |\hh| \leq H}} \int_{\KK} \prod_{i = 1}^d \min\left(1,\frac{1}{r \|\hh \cdot \ff'(\bfalpha) [\ee_i]\|}\right) \;\dee\bfalpha \lesssim 1 + H^m \left(\frac{\Log(r)}{r}\right)^k,
\]
and combining with \eqref{BVVZv2} gives
\[
A(q,\kappa,\bftheta) \lesssim q^d \left(\frac{1}{H^m} + \left(\frac{\Log(r)}{r}\right)^k\right) \text{ if $H,r \geq 1$}.
\]
Now suppose that $\phi(q) \leq \kappa \leq 1/4$. Then, assuming that $q$ is sufficiently large, we have $\delta q \kappa \geq 1$. So $H,r \geq 1$, $H \asymp 1/\kappa$, $r\asymp (q\kappa)^{1/2}$, and $\Log(r) \asymp \Log(q)$, and thus
\begin{equation}
\label{Aqkappabound2}
A(q,\kappa,\bftheta) \lesssim q^d \left(\kappa^m + \left(\frac{\Log(q)}{(q\kappa)^{1/2}}\right)^k\right).
\end{equation}
The inequality $\kappa \geq \phi(q)$ allows us to compare the two terms on the right-hand side of \eqref{Aqkappabound2}:
\begin{align*}
\frac{1}{\kappa^m}\left(\frac{\Log(q)}{(q\kappa)^{1/2}}\right)^k
\leq \frac{1}{\phi^m(q)} \left(\frac{\Log(q)}{(q\phi(q))^{1/2}}\right)^k
= \frac{(q^{-1}\Log^2(q))^{k/2}}{\phi^{m + k/2}(q)} = 1,
\end{align*}
which shows that the right-hand term of \eqref{Aqkappabound2} is smaller than the left-hand term. Thus $A(q,\kappa,\bftheta) \lesssim q^d \kappa^m$, and we have completed the proof in the case $\phi(q) \leq \kappa \leq 1/4$, $q$ sufficiently large.

If $\kappa \geq 1/4$, then trivially $A(q,\kappa,\bftheta) \leq (q + 1)^d \lesssim q^d \kappa^m$. On the other hand, if $\kappa \leq \phi(q)$, then 
 $A(q,\kappa,\bftheta) \leq A(q,\phi(q),\bftheta) \lesssim q^d \phi^m(q)$, assuming $q$ is large enough so that $\phi(q) \leq 1/4$. Thus, \eqref{counting} holds in these cases as well. Finally, if $q$ is bounded, then the right hand side of \eqref{counting} is bounded from below while the right hand side is bounded from above, so \eqref{counting} holds in this case as well.
\end{proof}

\section{Typicality of the condition \eqref{rank2}}
\label{sectiontypical}
The reader may notice that we did not use the hypothesis $m < \binom{d + 1}{2}$ in the proof of Theorem \ref{theoremkhinchin}, Case 2 (except for the trivial application to deduce that $d > 1$), but we have still written it into the theorem. Why? Because, as we show below, if $m \geq \binom{d + 1}{2}$, then it is impossible for the hypothesis \eqref{rank2} to be satisfied, so adding the hypothesis $m < \binom{d + 1}{2}$ does not restrict the generality of our theorem. Conversely, if $m < \binom{d + 1}{2}$, then the set of linear operators $A\in \LL(\Sym^2\R^d,\R^m)$ that satisfy
\begin{equation}
\label{rank2withA}
\rank\big(\tt\cdot A[\ee_i,\ee_j]\big)_{1\leq i,j\leq d} \geq 2 \all \tt\in\R^m\butnot\{\0\}
\end{equation}
(i.e. the analogue of \eqref{rank2} with $\ff''(\bfalpha)$ replaced by $A$) is a nonempty open subset of $\LL(\Sym^2\R^d,\R^m)$, meaning that Theorem \ref{theoremkhinchin} is non-vacuous in this case. Here $\LL(\Sym^2\R^d,\R^m)$ denotes the space of linear transformations from $\Sym^2\R^d$ to $\R^m$.% In the language of Section \ref{sectionkhinchin}, the hypotheses of the theorem are satisfiable if we add $m < \binom{d + 1}{2}$ as a numerical hypothesis, but not if we omit it.

Similar logic applies to the hypothesis $m \leq \binom d2$ of Theorem \ref{theoremDRV}. If it is not satisfied, then it is impossible for the main hypothesis of Theorem \ref{theoremDRV} to be satisfied; precisely, there are no linear operators $A\in\LL(\Sym^2\R^d,\R^m)$ such that
\begin{equation}
\label{DRVcondwithA}
\begin{split}
&\text{for all $\tt\in \R^m\butnot\{\0\}$, the matrix $\big(\tt\cdot A[\ee_i,\ee_j]\big)_{1\leq i,j\leq d}$ has}\\
&\text{at least two nonzero eigenvalues that share the same sign}
\end{split}
\end{equation}
(i.e. the analogue of \eqref{DRVcond} with $\ff'(\bfalpha)$ replaced by $A$). However, in this case the converse is not quite true; cf. Remark \ref{remarkdifficultproblem}. A partial converse that is true is that if $m \leq \binom{d - 1}{2}$, then there exist operators $A$ satisfying \eqref{DRVcondwithA}.

Beyond merely verifying that Theorem \ref{theoremkhinchin} is non-vacuous, we may also ask whether its hypotheses are satisfied for ``typical'' manifolds. If $d > 1$ and $m = \binom{d + 1}{2} - 1$, then we will show that the set of linear operators $A$ that do not satisfy \eqref{rank2withA} contains a nonempty open set, meaning that \eqref{rank2withA} both holds and fails on sets of positive measure. This is not a desirable property for a ``nondegeneracy'' condition, which should hold almost everywhere. It turns out that for \eqref{rank2withA} to hold almost everywhere, the stronger inequality $m \leq \binom{d}{2}$ is required. For \eqref{DRVcondwithA}, the appropriate inequality is $m \leq \binom{d - 1}{2}$.

We summarize the above remarks in the following theorem:

\begin{theorem}
\label{theoremtypical}
Fix $d,m\in\N$, and let $U$ (resp. $\w U$) be the set of all linear transformations $A \in \LL \df \LL(\Sym^2\R^d,\R^m)$ satisfying \eqref{rank2withA} (resp. \eqref{DRVcondwithA}). Then $U$ and $\w U$ are open subsets of $\LL$, and:
\begin{itemize}
\item[(i)] If $m \geq \binom{d + 1}{2}$, then $U$ is empty.
\item[(ii)] If $m < \binom{d + 1}{2}$, then $U$ is nonempty.
\item[(iii)] If $m > \binom{d}{2}$, then $U$ is not dense in $\LL$.
\item[(iv)] If $m \leq \binom{d}{2}$, then $U$ is dense in $\LL$; furthermore, its complement is contained in a proper algebraic subset of $\LL$.
\item[(v)] If $m > \binom{d}{2}$, then $\w U$ is empty.
\item[(vi)] If $m > \binom{d - 1}{2}$, then $\w U$ is not dense in $\LL$.
\item[(vii)] If $m \leq \binom{d - 1}{2}$, then $\w U$ is dense in $\LL$; furthermore, its complement is contained in a proper algebraic subset of $\LL$.
\end{itemize}
\end{theorem}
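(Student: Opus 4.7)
The plan is to reformulate every condition in terms of the image subspace $V \df A^*(\R^m) \subset \Sym^2\R^d$, where $A^* : \R^m \to \Sym^2\R^d$ is the transpose of $A$ (so that $\tt\cdot A$ coincides with $A^*(\tt)$ as a symmetric matrix). Under this identification, $A \in U$ iff $A^*$ is injective and $V$ contains no nonzero rank-$\leq 1$ matrix, and $A \in \w U$ iff $A^*$ is injective and $V$ contains no nonzero $B$ with $\max(p(B),n(B)) \leq 1$, where $p(B), n(B)$ are the numbers of positive and negative eigenvalues of $B$. Openness of $U$ and $\w U$ follows from the scale-invariance of the condition on $\tt$ (reducing to the compact sphere $S^{m-1}$) together with the pointwise openness of "rank $\geq 2$" and of "at least two nonzero eigenvalues of like sign".

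Parts (i) and (ii) fall out quickly from this reformulation together with the fact that the traceless hyperplane $\{B \in \Sym^2\R^d : \tr B = 0\}$ contains no nonzero rank-$\leq 1$ matrix (since $\tr(vv^T) = |v|^2 > 0$). For (i), when $m \geq \binom{d+1}{2} = \dim\Sym^2\R^d$ either $A^*$ has nontrivial kernel or $V = \Sym^2\R^d$ contains rank-1 matrices $vv^T$; either way $A\notin U$. For (ii), for any $m < \binom{d+1}{2}$ I take $V$ to be any $m$-dimensional subspace of the traceless hyperplane and let $A$ be any surjection $\Sym^2\R^d \to \R^m$ with $A^*(\R^m) = V$; then $A \in U$.

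For parts (iii), (iv), (vi), and (vii) I use a uniform framework. Let $\mathcal B \subset \Sym^2\R^d$ denote either the rank-$\leq 1$ cone (for the parts about $U$; $\dim \mathcal B = d$) or the set $\{vw^T + wv^T : v,w\in\R^d\}$ (for the parts about $\w U$; $\dim \mathcal B = 2d-1$), the latter being precisely $\{B : \max(p(B),n(B)) \leq 1\}$ since every rank-$2$ symmetric matrix of signature $(1,1)$ factors as $vw^T + wv^T$ with $v,w$ linearly independent. Setting $X \df \{(A,[\tt]) \in \LL \times \PP^{m-1} : \tt\cdot A \in \mathcal B\}$, a routine dimension count yields $\dim \LL - \dim X = \binom{d}{2} + 1 - m$ or $\binom{d-1}{2} + 1 - m$ in the two cases. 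In the regime where this difference is positive (parts (iv) and (vii)), the projection of $X$ to $\LL$ is a constructible set of strictly smaller dimension than $\LL$, whose Zariski closure is a proper algebraic subset containing the complement of $U$ (resp.\ $\w U$), giving density. In the opposite regime (parts (iii) and (vi)), I exhibit an $A_0$ together with a whole neighborhood in the complement: take $A_{0,1}$ to be the standard model point of $\mathcal B$ ($e_1 e_1^T$, or $e_1 e_2^T + e_2 e_1^T$) and choose $A_{0,2},\ldots,A_{0,m}$ to span a complement in $\Sym^2\R^d$ of the tangent space to $\mathcal B$ at $A_{0,1}$ (possible thanks to the numerical hypothesis). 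The linearization at $A_0$ of the defining equation $\tt\cdot A = vv^T$ (resp.\ $\tt\cdot A = vw^T + wv^T$) in the $(\tt,v)$- (resp.\ $(\tt,v,w)$-) variables is then surjective onto $\Sym^2\R^d$, and the implicit function theorem yields a neighborhood of $A_0$ in the complement.

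Part (v) is the genuine obstacle, because dimension counting over $\R$ does not suffice: part (ii) already shows that a real subspace of dimension $\binom{d+1}{2}-1$, far larger than $\binom{d}{2}$, can avoid the rank-$\leq 1$ cone, so the larger bad set for $\w U$ must be exploited more carefully. My plan is induction on $d$. The base $d = 2$ uses a connectedness/antipodality argument: the $\w U$ condition forces every nonzero $B \in V$ to be positive or negative definite, and since these two open cones are disjoint and antipodal, only $\dim V \leq 1 = \binom{2}{2}$ is consistent with the fact that both $V\setminus\{\0\}$ and $-V\setminus\{\0\}$ are connected and must lie in a single cone each. For the inductive step, fix $v \in \R^d \setminus \{\0\}$ and consider $K_v \df \{B \in V : Bv = \0\}$; the restriction map $K_v \to \Sym^2(v^\perp)$ is injective, and because $v \in \ker B$ the $\w U$ condition descends to the image, so by induction $\dim K_v \leq \binom{d-1}{2}$. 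Writing $\dim V = \dim K_v + \dim I_v$ with $I_v \df \{Bv : B \in V\} \subset \R^d$, it remains to prove the sub-claim $I_v \subsetneq \R^d$, which will yield $\dim V \leq \binom{d-1}{2} + (d-1) = \binom{d}{2}$. This is the step I expect to be the main difficulty: if $I_v = \R^d$ one obtains some $B_0 \in V$ with $B_0 v = v$, and perturbing $B_0$ by the $\binom{d-1}{2}$-dimensional family in $K_v$ while applying Cauchy interlacing to restrictions to two-dimensional subspaces containing $v$ should eventually produce a nonzero element of $V$ with $\max(p,n) \leq 1$, contradicting $\w U$; a clean execution may however require a global topological or pencil-of-quadratic-forms argument rather than the elementary perturbation just sketched.
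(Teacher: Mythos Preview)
Your reformulation in terms of the image subspace $V=A^*(\R^m)$ and the openness argument match the paper exactly, as do parts (i) and (ii) (the paper phrases (ii) in a dual picture as ``any positive-definite quadratic form works'', which unwinds to your traceless-hyperplane observation). For (iii), (iv), (vi), (vii) your unified incidence-variety approach $X=\{(A,[\tt]):\tt\cdot A\in\mathcal B\}$ together with the dimension count and the implicit function theorem is correct and somewhat more streamlined than the paper's route: the paper first passes to the dual space $\Omega=\LL(\Sym^2\R^d,\R^\ell)$ with $\ell=\binom{d+1}{2}-m$, reformulates everything in terms of tuples of $\ell$ quadratic forms on $\R^d$, and for (iv) invokes the complex picture (the multipolynomial resultant) rather than a direct real semi-algebraic dimension count. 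Both routes work; yours avoids the complex detour. One minor quibble: over $\R$ the projection of $X$ is semi-algebraic rather than constructible, but the Zariski-closure step goes through unchanged since a semi-algebraic set and its Zariski closure share the same dimension.

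The genuine gap is in part (v). Your inductive scheme reduces the bound $\dim V\le\binom{d}{2}$ to the sub-claim $I_v\subsetneq\R^d$, which you correctly flag as unfinished; the perturbation/Cauchy-interlacing sketch does not close, and it is not clear that it can be made to close without importing an idea of comparable strength to the one below. The paper's argument is entirely different and very short. Pass to the quotient: let $\ell=\binom{d+1}{2}-m$ and fix any linear surjection $\omega:\Sym^2\R^d\to\R^\ell$ with $\ker\omega=V$. The $\w U$ condition says exactly that $vv^T-ww^T\notin V$ whenever $v\ne\pm w$, i.e.\ $\omega(vv^T)\ne\omega(ww^T)$. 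Hence the continuous map $v\mapsto\omega(vv^T)$, restricted to any open set $W\subset\R^d$ with $W\cap(-W)=\emptyset$, is an injection $W\to\R^\ell$. Invariance of domain then forces $\ell\ge d$, i.e.\ $m\le\binom{d}{2}$. No induction and no sub-claim are needed.
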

\begin{remark}
The proof below depends crucially on the right-hand side of \eqref{rank2withA} being 2; it would be interesting to ask what happens if if 2 is replaced by a larger integer.
\end{remark}
\begin{remark}
One might wonder whether knowing that \eqref{rank2withA} or \eqref{DRVcondwithA} holds on a full measure set justifies one in thinking that ``most'' $\CC^2$ functions $\ff:\R^d\to\R^m$ satisfy \eqref{rank2} or \eqref{DRVcond}, respectively. If we required the hypothesis to hold for all $\bfalpha\in\KK$, then we could run into a problem: perhaps the set of counterexamples to \eqref{rank2withA} or \eqref{DRVcondwithA} has positive codimension, but is intersected transversally by some set of the form $\{\ff''(\bfalpha) : \bfalpha\in\KK\}$. Then perturbations of this $\ff$ would fail to satisfy \eqref{rank2} or \eqref{DRVcond} on a nonempty (but positive codimension) set of $\bfalpha\in\KK$. But since the conditions are only required to hold on a set of full Lebesgue measure, this does not cause any problem.
\end{remark}
\begin{proof}
Since in \eqref{rank2withA} and \eqref{DRVcondwithA}, the quantifier ``$\forall \tt\in \R^m\butnot\{\0\}$'' can be replaced by ``$\forall \tt\in S^{m - 1}$'' without affecting the truth values, a standard compactness argument shows that $U$ and $\w U$ are open. We proceed to reduce (i)-(vii) to a series of statements about quadratic forms. For each $A\in\LL$, let
\[
V_A = \{\big(\tt\cdot A[\ee_i,\ee_j]\big)_{1 \leq i,j \leq d} : \tt\in\R^m\} \subset \Sym^2\R^d.
\]
Then $A\in U$ if and only if
\begin{itemize}
\item[(I)] The map $\R^m\ni \tt\mapsto \big(\tt\cdot A[\ee_i,\ee_j]\big)_{1 \leq i,j \leq d} \in V_A$ is injective, and 
\item[(II)] For all $B\in V_A\butnot\{\0\}$, $\rank(B) \geq 2$.
\end{itemize}
Similarly, $A\in \w U$ if and only if (I) holds as well as
\begin{itemize}
\item[(III)] For all $B\in V_A\butnot\{\0\}$, $B$ has at least two nonzero eigenvalues that share the same sign.
\end{itemize}
Now, a nonzero element of $\Sym^2\R^d$ is of rank one if and only if it can be written in the form $\pm \vv^2$, where $\vv\in\R^d\butnot\{\0\}$. Similarly, a nonzero element of $\Sym^2\R^d$ fails to have two nonzero eigenvalues sharing the same sign if and only if it can be written in the form $\vv^2 - \ww^2$, where $\vv,\ww\in\R^d$ and $\vv \neq \pm \ww$. Thus, (II) and (III) are respectively equivalent to:
\begin{itemize}
\item[(II$'$)] For all $\vv\in\R^d\butnot\{\0\}$, $\vv^2 \notin V_A$.
\item[(III$'$)] For all $\vv,\ww\in\R^d$ such that $\vv\neq \pm\ww$, $\vv^2 - \ww^2 \notin V_A$.
\end{itemize}
Now if $m > \binom{d + 1}{2}$, then (I) is not satisfied for any $A\in\LL$, so $U = \w U = \emptyset$ and we are done. Otherwise, let $G_\LL$ be the set of all $A\in\LL$ such that (I) is satisfied, i.e. the set of all surjective transformations from $\Sym^2\R^d$ to $\R^m$. Note that the complement of $G_\LL$ is a proper algebraic subset of $\LL$.

Now let $\ell = \binom{d + 1}{2} - m \geq 0$, and consider the spaces $\Omega = \LL(\Sym^2\R^d,\R^\ell)$ and $G_\Omega = \{\text{surjective elements of $\Omega$}\}$. For each $\omega\in \Omega$, let $W_\omega = \{B\in \Sym^2\R^d : \omega[B] = \0\}$. Then the maps $G_\LL\ni A\mapsto V_A$ and $G_\Omega\ni \omega\mapsto W_\omega$ are both algebraic surjections onto the Grassmanian space $\GG_m(\Sym^2\R^d) = \{\text{$m$-dimensional subspaces of $\Sym^2\R^d$}\}$. Letting
\begin{align*}
U_2 &= \{V\in \GG_m(\Sym^2\R^d) : \all \vv\in \R^d\butnot\{\0\} \;\; \vv^2\notin V\}\\
\w U_2 &= \{V\in \GG_m(\Sym^2\R^d) : \all \vv,\ww \in \R^d \;\; \text{if $\vv\neq\pm\ww$ then $\vv^2 - \ww^2\notin V$}\}\\
U_3 &= \{\omega\in\Omega : \omega[\vv^2]\neq \0 \all \vv\in\R^d\butnot\{\0\}\}\\
&= \{(Q_1,\ldots,Q_\ell) \text{ quadratic forms on $\R^d$}: \forall \vv\in\R^d\butnot\{\0\} \;\exists i = 1,\ldots,\ell \;\; Q_i(\vv) \neq 0\}\\
\w U_3 &= \{\omega\in\Omega : \omega[\vv^2]\neq \omega[\ww^2] \all \vv,\ww\in\R^d \text{ such that $\vv\neq\pm\ww$}\}\\
&= \{(Q_1,\ldots,Q_\ell) \text{ quadratic forms on $\R^d$}: \forall \vv,\ww\in\R^d \text{ if $\vv\neq\pm\ww$ then} \;\exists i = 1,\ldots,\ell \;\; Q_i(\vv) \neq Q_i(\ww)\},
\end{align*}
we have
\begin{align*}
U &= \{A\in G_\LL : V_A \in U_2\},&
U_3 \cap G_\Omega &= \{\omega\in G_\Omega : W_\omega \in U_2\},\\
\w U &= \{A\in G_\LL : V_A \in \w U_2\},&
\w U_3 \cap G_\Omega &= \{\omega\in G_\Omega : W_\omega \in \w U_2\}.
\end{align*}
So to complete the proof, we need to show:
\begin{itemize}
\item[(i$'$)] If $\ell = 0$, then $U_3$ is empty.
\item[(ii$'$)] If $\ell > 0$, then $U_3$ is nonempty.
\item[(iii$'$)] If $\ell < d$, then $U_3$ is not dense in $\Omega$.
\item[(iv$'$)] If $\ell \geq d$, then $U_3$ is dense in $\Omega$; furthermore, its complement is contained in a proper algebraic subset of $\Omega$.
\item[(v$'$)] If $\ell < d$, then $\w U_3$ is empty.
\item[(vi$'$)] If $\ell < 2d - 1$, then $\w U_2$ is not dense in $\GG_m(\Sym^2\R^d)$.
\item[(vii$'$)] If $\ell \geq 2d - 1$, then $\w U_2$ is dense in $\GG_m(\Sym^2\R^d)$; furthermore, its complement is contained in a proper algebraic subset of $\GG_m(\Sym^2\R^d)$.
\end{itemize}
Now (i$'$) is obvious, and (ii$'$) follows from the observation that if $Q_1$ is positive-definite, then $(Q_1,0,\ldots,0) \in U_3$. Intuitively, (iii$'$) and (iv$'$) are true because of ``number of variables'' considerations; the intersection of the zero sets of $\ell$ quadratic forms on $\R^d$ should have dimension $d - \ell$, and so generically, the intersection should be zero-dimensional (i.e. equal to $\{\0\}$) if and only if $\ell \geq d$. We proceed to verify this intuitive idea.

When $\ell = 1$, (iii$'$) can be verified by considering any quadratic form which is neither positive semidefinite nor negative semidefinite, but for the general case a different argument is needed. Suppose that $\ell < d$, and for each $i = 1,\ldots,\ell$ let $Q_i(\xx) = x_i x_d$. For each $(\w Q_1,\ldots,\w Q_\ell)\in \Omega$, consider the map
\[
\bfPhi_{\w Q_1,\ldots,\w Q_\ell} : \R^\ell \ni \xx \mapsto (\w Q_1,\ldots,\w Q_\ell)(x_1,\ldots,x_\ell,0,\ldots,0,1),
\]
and observe that $\bfPhi_{Q_1,\ldots,Q_\ell}$ is the identity map. It follows that small perturbations of this map will contain $\0$ in their range. Thus, if $(\w Q_1,\ldots,\w Q_\ell)$ is sufficiently close to $(Q_1,\ldots,Q_\ell)$, then $\0$ is in the range of $\bfPhi_{\w Q_1,\ldots,\w Q_\ell}$, which implies that $(\w Q_1,\ldots,\w Q_\ell) \notin U_3$. So $(Q_1,\ldots,Q_\ell)$ is in the interior of the complement of $U_3$. This completes the proof of (iii$'$).

Next, let $U_3^\C \subset U_3$ be the set of all $\omega\in\Omega$ such that $\omega[\vv^2] \neq \0$ for all $\vv\in\C^d\butnot\{\0\}$. It follows from standard considerations in algebraic geometry that the set $F_3^\C \df \Omega\butnot U_3^\C$ is an algebraic set.\Footnote{For example, if $R(Q_1,\ldots,Q_\ell)$ denotes the multipolynomial resultant of the homogeneous polynomials $Q_1,\ldots,Q_\ell$, then $F_3^\C = \{\omega\in\Omega : R(\omega) = 0\}$ \cite[Theorem 3.(2.3)]{CLoS2}, and in particular $F_3^\C$ is algebraic.} % A more abstract proof of the algebraicity of $F_3^\C$ can be given using the quantifier elimination theorem for the theory of polynomial equations over an algebraically closed field \cite[??\internal]{FriedJarden}: by this theorem, $F_3^\C$ is Zariski constructible, and any closed Zariski constructible set is Zariski closed, so $F_3^\C$ is Zariski closed i.e. algebraic.
Now suppose that $\ell \geq d$, and for each $i = 1,\ldots,d$ let $Q_i(\xx) = x_i^2$. Then for all $\vv\in\C^d\butnot\{0\}$, we have $v_i \neq 0$ for some $i = 1,\ldots,d$ and thus $Q_i(\vv) = v_i^2 \neq 0$. So $(Q_1,\ldots,Q_d,0,\ldots,0)\in U_3^\C$ and in particular $U_3^\C \neq \emptyset$. Thus $F_3^\C$ is a proper algebraic subset of $\Omega$. Since such a set has dimension strictly less than that of the ambient space, it is nowhere dense and thus $U_3^\C$ (and similarly $U$) is dense. This completes the proof of (iv$'$).

Let $W\subset \R^d$ be a nonempty open set such that $W\cap -W = \emptyset$. Suppose that $\w U_3 \neq \emptyset$, and fix $(Q_1,\ldots,Q_\ell) \in \w U_3$. Then $(Q_1,\ldots,Q_\ell):W\to \R^\ell$ is an injective continuous map. Since such a map cannot be dimension-decreasing, we have $\ell\geq d$. This completes the proof of (v$'$).

Let $S = \{\vv^2 - \ww^2 : \vv,\ww\in\R^d\}$, so that $\w U_2 = \{V\in \GG_m(\Sym^2\R^d) : V\cap S = \{\0\}\}$. Note that $S$ is an irreducible closed semi-algebraic set. To compute the dimension of $S$, we note that for all $\vv,\ww\in\R^d$ and $t\in\R$, we have
\[
(\cosh(t)\vv + \sinh(t)\ww)^2 - (\sinh(t)\vv + \cosh(t)\ww)^2 = \vv^2 - \ww^2,
\]
so the map $\gg:\R^{2d} \ni (\vv,\ww)\mapsto \vv^2 - \ww^2 \in S$ has level sets of dimension at least 1 and thus $\dim(S) \leq 2d - 1$. Conversely, direct computation shows that the kernel of $\gg'(\ee_1,\ee_2)$ is $\R(\ee_2,\ee_1)$, a subspace of dimension 1. So the image of $\gg'(\ee_1,\ee_2)$ has dimension $2d - 1$, and thus $\dim(S) \geq 2d - 1$. So $\dim(S) = 2d - 1$.

Let $B$ be a smooth point of $S$, and let $T_B S$ denote the tangent space of $S$ at $B$. Suppose that $\ell < 2d - 1$. Then there exists a subspace $V_0\in \GG_m(\Sym^2\R^d)$ intersecting $S$ transversely at $B$. Here by ``transversely'' we mean that $V_0 + T_B S = \Sym^2\R^d$; we allow $V_0\cap T_B S$ to be nontrivial, and in fact necessarily $\dim(V_0\cap T_B S) \geq 1$ since $B\in V_0\cap T_B S$. If $V\in\GG_m(\Sym^2\R^d)$ is sufficiently close to $V_0$, then $V\cap S \neq \{\0\}$, so $V\notin \w U_2$. So there is a neighborhood of $V_0$ disjoint from $\w U_2$, proving (vi$'$).

Finally, suppose that $\ell \geq 2d - 1$. For each $B\in S\butnot\{\0\}$, the set $I_B \df \{V\in \GG_m(\Sym^2\R^d) : B\in V\}$ has codimension $\ell$ in $\GG_m(\Sym^2\R^d)$. On the other hand, if $\sim$ denotes the projective equivalence relation (i.e. $B\sim tB$ for all $B\in \Sym^2\R^d\butnot\{\0\}$ and $t\in\R\butnot\{0\}$), then $I_{B_1} = I_{B_2}$ whenever $B_1\sim B_2$. So if
\[
\w F_2 = \GG_m(\Sym^2\R^d)\butnot \w U_2 = \bigcup_{B\in S\butnot\{\0\}} I_B,
\]
then
\[
\codim(\w F_2) \geq \ell - \dim(S/\sim) = \ell - (2d - 2) > 0.
\]
Since $\w F_2$ is semi-algebraic, it has the same dimension as its Zariski closure. Thus the Zariski closure of $\w F_2$ is a proper algebraic subset of $\GG_m(\Sym^2\R^d)$ containing the complement of $\w U_2$, completing the proof of (vii$'$).
\end{proof}

\begin{remark}
Following the logic of the proof of (ii) using the identity matrix as an example of a positive-definite matrix shows that if $m = \binom{d + 1}{2} - 1$, then the function
\[
\ff(\alpha_1,\ldots,\alpha_d) = \left(\alpha_1^2 - \alpha_2^2,\alpha_2^2 - \alpha_3^2,\ldots,\alpha_{d - 1}^2 - \alpha_d^2,\alpha_1\alpha_2,\alpha_1\alpha_3,\ldots,\alpha_{d - 1}\alpha_d\right)
\]
(or more generally any function such that the matrices $\big(f_k''[\ee_i,\ee_j]\big)_{1\leq i,j\leq d}$ ($k = 1,\ldots,m$) are a basis for the space of trace-free symmetric matrices) satisfies \eqref{rank2}.
\end{remark}

\begin{remark}
\label{remarkdifficultproblem}
Let $m = 2$ and $d = 3$, and let $\gamma:\Sym^2\R^3\to\R$ be the map that sends a matrix to its middle eigenvalue (i.e. the eigenvalue which is both second-highest and second-lowest). Then $\gamma$ is continuous, $\gamma(-A) = -\gamma(A)$, and $\gamma(A) = 0$ if and only if $A\in S$. So $\Sym^2\R^3\butnot S$ is split into two disjoint connected components $\{\gamma > 0\}$ and $\{\gamma < 0\}$, symmetric to each other via reflection through the origin. A set split in this way cannot contain any subset of the form $V\butnot\{\0\}$, $V\in\GG_2(\Sym^2\R^3)$. So $\w U_2$ is empty in this case, and thus there are no linear operators $A$ satisfying \eqref{DRVcondwithA}. Since $2 \leq \binom{3}{2}$, this shows that the inequality $m \leq \binom{d}{2}$ is not a sufficient condition for the existence of $A$ satisfying \eqref{DRVcondwithA}. (It is not hard to check that this counterexample has the smallest possible dimensions.) Thus it appears to be a difficult problem to determine necessary and sufficient conditions for the existence of such an $A$.
\end{remark}

\bibliographystyle{amsplain}

\bibliography{bibliography}

\end{document}